\newcommand{\R} {{\mathbb R}}                              
\newcommand {\K} {\mathcal{K}}                             
\newcommand {\di} {\Delta}                             
\newcommand {\w} {\omega}                             
\newcommand {\tr} {\Delta}                             
\newcommand {\sg} {\gamma}                             
\newcommand{\eps} {\varepsilon}
\newcommand {\dpos} {{\mathcal D}}
\newcommand{\hide}[1]{}
\newcommand{\vis}{\mathcal{V}}
\newcommand{\vispos}{\mathfrak{V}_n}
\newcommand{\spaceg}{G_S}
\newcommand{\spacep}{\mathfrak{P}_S}
\theoremstyle{plain}
\newtheorem{thm}{Theorem}
\newtheorem{prop}[thm]{Proposition}
\newtheorem{cor}[thm]{Corollary}
\newtheorem{lem}[thm]{Lemma}
\newtheorem*{opp}{Open Problem}
\newtheorem*{dop}{Deformation Problem}
\theoremstyle{definition}
\newtheorem{defn}[thm]{Definition}
\newtheorem*{exmp}{Example}
\theoremstyle{remark}
\newtheorem*{rem}{Remark}
\numberwithin{equation}{section}
 \gdef\xxxmark{%
   \protected@write\@auxout{\def\PAGE{ page }}
     {\@percentchar xxx: section \thesubsubsection \PAGE \thepage}%
   \expandafter\ifx\csname @mpargs\endcsname\relax 
     \expandafter\ifx\csname @captype\endcsname\relax 
       \marginpar{xxx}
     \else
       xxx 
     \fi
   \else
     xxx 
   \fi}
 \gdef\xxx{\@ifnextchar[\xxx@lab\xxx@nolab}
 \long\gdef\xxx@lab[#1]#2{{\bf [\xxxmark #2 ---{\sc #1}]}}
 \long\gdef\xxx@nolab#1{{\bf [\xxxmark #1]}}
 \gdef\turnoffxxx{\long\gdef\xxx@lab[##1]##2{}\long\gdef\xxx@nolab##1{}}%
\begin{document}

\title {Visibility graphs and deformations of associahedra}

\subjclass[2000]{Primary 52B11}

\author{Satyan L.\ Devadoss}
\address{S.\ Devadoss: Williams College, Williamstown, MA 01267}
\email{satyan.devadoss@williams.edu}

\author{Rahul Shah}
\address{R.\ Shah: Williams College, Williamstown, MA 01267}
\email{rahul.a.shah@williams.edu}

\author{Xuancheng Shao}
\address{X.\ Shao: MIT, Cambridge, MA 02139}
\email{zero@mit.edu}

\author{Ezra Winston}
\address{E.\ Winston: Bard College, Annandale-on-Hudson, NY 12504}
\email{ew429@bard.edu}

\begin{abstract}
The associahedron is a convex polytope whose face poset is based on nonintersecting diagonals of a convex polygon.  In this paper, given an arbitrary simple polygon $P$, we construct a polytopal complex analogous to the associahedron based on convex diagonalizations of $P$.  We describe topological properties of this complex and provide realizations based on secondary polytopes.  Moreover, using the visibility graph of $P$, a deformation space of polygons is created which encapsulates substructures of the associahedron.
\end{abstract}

\keywords{visibility graph, associahedron, secondary polytope}

\maketitle


\baselineskip=16pt

%
%
\section{Associahedra from Polygons}
\subsection{}

Let $P$ be a simple planar polygon with labeled vertices.  Unless mentioned otherwise, assume the vertices of $P$ to be in general position, with no three collinear vertices.  A \emph{diagonal} of $P$ is a line segment connecting two vertices of $P$ which is contained in the interior of $P$.  A \emph{diagonalization} of $P$ is a partition of $P$ into smaller polygons using noncrossing diagonals of $P$.  Let a \emph{convex} diagonalization of $P$ be one which divides $P$ into smaller convex polygons. 

\begin{defn} \label{d:poset}
Let $\pi(P)$ be the poset of all convex diagonalizations of $P$ where for $a \prec a'$ if $a$ is obtained from $a'$ by adding new diagonals.\footnote{Mention of \emph{diagonals} will henceforth mean noncrossing ones.}
\end{defn}

\noindent Figure~\ref{f:refine_pic} shows diagonalizations of a polygon $P$.  Parts (b) through (d) show some elements of $\pi(P)$ where part (c) is greater than (d) in the poset ordering.

\begin{figure}[h] 
\includegraphics[width=.9\textwidth]{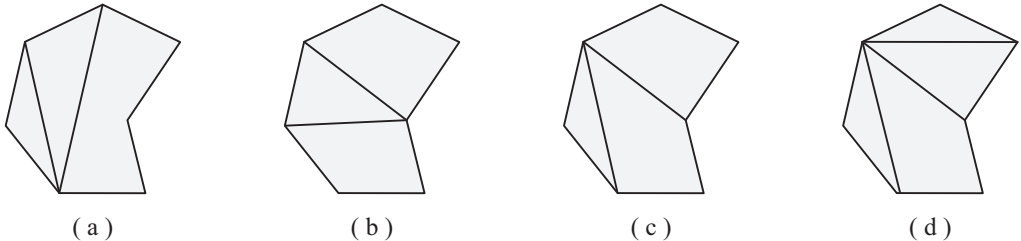}
\caption{A polygon with (a) a diagonalization and (b) -- (d) convex diagonalizations.}
\label{f:refine_pic}
\end{figure}

If $P$ is a convex polygon, all of its diagonalizations will obviously be convex.  The question was asked in combinatorics as to whether there exists a convex polytope whose face poset is isomorphic to $\pi(P)$ for a convex polygon $P$.  It was independently proven in the affirmative by Lee \cite{lee} and Haiman (unpublished): 
 
\begin{thm} \label{t:convexass}
When $P$ is a convex polygon with $n$ sides, the \emph{associahedron} $\K_n$ is a convex polytope of dim $n-3$ whose face poset is isomorphic to $\pi(P)$.
\end{thm}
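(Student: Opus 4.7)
The plan is to realize $\K_n$ as the secondary polytope $\Sigma(A)$ of the vertex set $A$ of $P$, and then appeal to the Gelfand--Kapranov--Zelevinsky theorem identifying its face lattice with the poset of coherent polyhedral subdivisions of $A$. Recall that $\Sigma(A) \subset \R^{|A|}$ is defined as the convex hull of the area vectors $\phi_T$ indexed by triangulations $T$ of $A$, where the $i$-th coordinate of $\phi_T$ is the total area of the triangles of $T$ incident to the $i$-th vertex.

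First, since the $n$ vertices of the convex polygon $P$ sit in general position in $\R^2$, the classical dimension count gives $\dim \Sigma(A) = n - 2 - 1 = n-3$, matching the claim. Second, because $P$ is convex every vertex of $A$ is extremal, so every polyhedral subdivision of $A$ uses only boundary points and decomposes $P$ into smaller convex subpolygons by noncrossing diagonals; these are precisely the elements of $\pi(P)$. The GKZ refinement order matches Definition~\ref{d:poset} directly: inserting a new diagonal refines the subdivision and replaces a face of $\Sigma(A)$ by a strictly lower-dimensional one, so triangulations correspond to vertices of $\Sigma(A)$ while the trivial subdivision corresponds to the whole polytope.

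The principal obstacle is verifying \emph{coherence} of every convex diagonalization, since the GKZ correspondence only sees regular subdivisions. For each convex diagonalization $D$ of $P$, I would exhibit a height function $h : A \to \R$ whose upper (or lower) convex hull, projected back to the plane, reproduces $D$. Combinatorially, $D$ has a planar dual tree on its subpolygons; walking along this tree, I would assign heights vertex by vertex so that each chosen diagonal becomes a genuine fold crease of the piecewise-linear lift while no undesired crease is introduced along any non-diagonal pair. The convexity of $P$ together with the freedom to scale height increments ensures no inconsistency arises at any step of this inductive assignment.

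Once coherence has been verified, every element of $\pi(P)$ is realized by a face of $\Sigma(A)$ and conversely, so the Gelfand--Kapranov--Zelevinsky theorem yields the order isomorphism of face lattices and $\Sigma(A)$ serves as the desired polytope $\K_n$. I expect the coherence step to be the only nontrivial part: the dimension count and the identification of subdivisions with convex diagonalizations are essentially formal given the convexity of $P$, whereas producing explicit height functions requires genuine geometric input.
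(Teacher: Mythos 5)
The paper does not actually prove this theorem: it is quoted as a classical result of Lee and of Haiman, with no argument supplied. Your proposal is therefore necessarily a different route, and it is a correct and standard one: realize $\K_n$ as the secondary polytope of the vertex configuration and invoke the Gelfand--Kapranov--Zelevinsky correspondence between faces and regular subdivisions, reducing everything to the fact that every convex diagonalization of a convex polygon is coherent. Interestingly, this is exactly the machinery the paper itself deploys later (Section 3), where the GKZ theorem for convex $n$-gons is cited and where the ``all area vectors lie on the hull'' proof uses precisely the dual-tree height-function construction you sketch for the coherence step: number the cells along the rooted dual tree and choose increments large enough that adjacent lifted cells meet in a convex crease. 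What Lee's cited proof buys is a direct combinatorial construction of the polytope; what your route buys is an explicit geometric realization plus the framework the paper needs anyway for the nonconvex case. Two details in your coherence step deserve explicit mention rather than a wave: for a cell of the diagonalization with more than three vertices, the heights you assign must place all of its lifted vertices on a common plane (so you should choose a plane per cell as you enter it in the tree walk, not a height per vertex independently), and you need the standard fact that a piecewise-linear function over a convex domain that is locally convex across each interior crease is globally convex, so that the lower hull of the lifted points really projects to the intended diagonalization and to nothing finer. Neither is a gap --- both are routine --- but the proof is not complete without them.
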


\noindent Almost twenty years before this result was discovered, the associahedron had originally been defined by Stasheff for use in homotopy theory in connection with associativity properties of $H$-spaces \cite{sta}.  Associahedra have continued to appear in a vast number of mathematical fields, currently leading to numerous generalizations (see \cite{cd}  \cite{cfz} for some viewpoints).   Figure~\ref{f:k5} shows the three-dimensional polyhedron $\K_6$ on the left.  The 1-skeleton of $\K_6$ is shown on the right, with its vertices labeled with the appropriate diagonalizations of the convex hexagon.

\begin{rem}
Classically, the associahedron is based on all bracketings of $n-1$ letters and denoted as $K_{n-1}$.  Since there is a bijection between bracketings on $n-1$ letters and diagonalizations of $n$-gons, we use the script notation $\K_n$ with an index shift to denote the associahedron $K_{n-1}$ for ease of notation in our polygonal context.
\end{rem}

\begin{figure}[h]
\includegraphics[width=.9\textwidth]{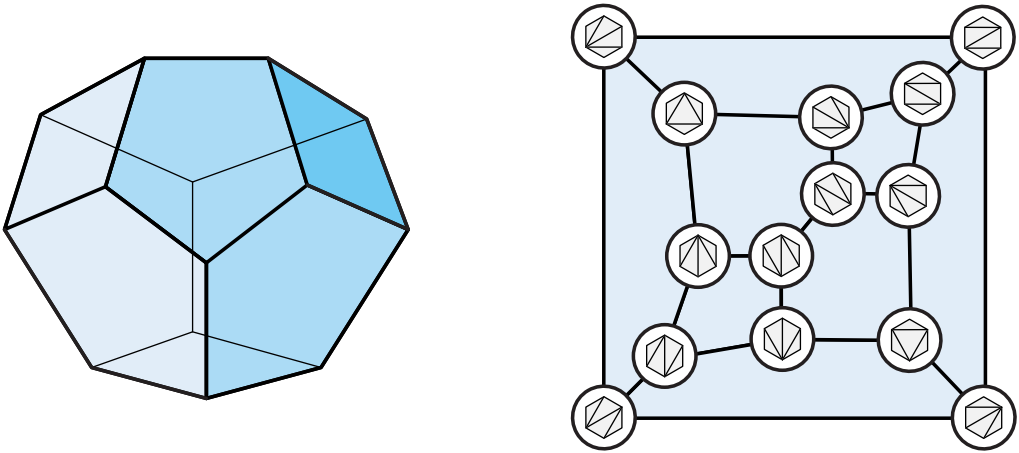}
\caption{The 3D associahedron $\K_6$.}
\label{f:k5}
\end{figure}

\subsection{}

We now extend Theorem~\ref{t:convexass} for arbitrary simple polygons $P$.  A \emph{polytopal complex} $S$ is a finite collection of convex polytopes (containing all the faces of its polytopes) such that the intersection of any two of its polytopes is a (possibly empty) common face of each of them. The dimension of the complex $S$ denoted as $\dim(S)$ is the largest dimension of a polytope in $S$.

\begin{thm} \label{t:subcomplex}
For a polygon $P$ with $n$ vertices, there exists a polytopal complex $\K_P$ whose face poset is isomorphic to $\pi(P)$.  Moreover, $\K_P$ is a subcomplex of the associahedron $\K_n$.
\end{thm}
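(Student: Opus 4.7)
The strategy is to realize $\K_P$ as a subcomplex of the convex-polygon associahedron $\K_n$ via cyclic labeling. Label the vertices of $P$ as $1, \ldots, n$ in cyclic order, and let $Q$ denote a convex $n$-gon with the same cyclic labels. Every diagonal of $P$ corresponds to a pair of non-adjacent labels, and so to a diagonal of $Q$, giving an order-preserving injection $\pi(P) \hookrightarrow \pi(Q)$. I would then define $\K_P$ to be the subcomplex of $\K_n$ consisting of those closed faces whose associated diagonalization of $Q$ actually comes from a convex diagonalization of $P$.

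Two things must be checked. First, non-crossing in $P$ has to agree with non-crossing in $Q$: two diagonals $\{i,j\}$ and $\{k,l\}$ of $P$ cross inside $P$ if and only if the labels $i,k,j,l$ interlock cyclically on $\partial P$. The forward direction is immediate, and the reverse follows from the Jordan curve theorem applied to $\overline{ij}$ inside $P$, which separates $P$ into two closed subregions meeting only in that chord, forcing any chord between the two arcs to cross it. Thus $\pi(P)$ really does embed as a subposet of the face poset of $\K_n$, and this embedding is compatible with the combinatorial cell structure of $\K_n$.

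The substantive step is that this subset is closed under taking subfaces, equivalently that the convex diagonalizations of $P$ are closed under refinement. Suppose $D$ is a convex diagonalization of $P$ with convex pieces $P_1, \ldots, P_r$, and let $\{i,j\}$ be a diagonal of $Q$ non-crossing with $D$. Combinatorially $\{i,j\}$ must lie inside some combinatorial piece, so $i$ and $j$ are vertices of a common $P_k$; since $P_k$ is convex, the actual segment $\overline{ij}$ lies inside $P_k \subseteq P$, so $\{i,j\}$ is a bona fide diagonal of $P$ that cuts the convex polygon $P_k$ into two convex subpolygons. Iterating one diagonal at a time shows that every refinement of $D$ is again a convex diagonalization of $P$.

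I expect this downward-closure step to be the only real obstacle: a priori, a refinement of $D$ in $\K_n$ could introduce abstract diagonals of $Q$ that fail to be diagonals of $P$ at all. Convexity of the pieces of $D$ is exactly what guarantees that such diagonals sit inside $P$ and that convexity is preserved under further cutting. Once this is established, $\K_P$ inherits the structure of a polytopal complex from $\K_n$, since its cells are faces of $\K_n$, and its face poset is by construction $\pi(P)$.
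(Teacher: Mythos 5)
Your proposal is correct and follows essentially the same route as the paper: label the vertices cyclically, embed $\pi(P)$ into $\pi(Q)$ for a convex $n$-gon $Q$, and verify that the image is downward-closed because any new non-crossing diagonal must lie inside a convex piece of the diagonalization and hence is a genuine diagonal of $P$ preserving convexity. You spell out the crossing-equivalence and the downward-closure step in more detail than the paper does, but the underlying argument is the same.
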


\begin{proof}
Let $p_1, \ldots, p_n$ be the vertices of $P$ labeled cyclically.  For a convex $n$-gon $Q$, let  $q_1, \ldots, q_n$ be its vertices again with clockwise labeling. The natural mapping from $P$ to $Q$ (taking $p_i$ to $q_i$) induces an injective map $\phi: \pi(P) \longrightarrow \pi(Q)$.  Assign to $t \in \pi(P)$ the face of $K_n$ that corresponds to $\phi(t) \in \pi(Q)$.  It is trivial to see that $\phi(t_1) \prec \phi(t_2)$ in $\pi(Q)$ if $t_1 \prec t_2$ in $\pi(P)$.

Moreover, for any $\phi(t)$ in $\pi(Q)$ and any diagonal $(q_i, q_j)$ which does not cross the diagonals of $\phi(t)$, we see that $(p_i, p_j)$ does not cross any diagonal of $t$.  
So if a face $f$ of $\K_n$ is contained in a face corresponding to $\phi(t)$, then there exists a diagonalization $t' \in \pi(P)$ where $\phi(t')$ corresponds to $f$ and $t' \prec t$.
Since the addition of any noncrossing diagonals to a convex diagonalization is still a convex diagonalization, the intersection of any two faces\footnote{Such an intersection could  possibly be empty if diagonals are crossing.} is also a face in $\K_P$. So $\K_P$ satisfies the requirements of a polytopal complex and (due to the map $\phi$) is a subcomplex of $\K_n$.
\end{proof}

\begin{cor}
Let $P$ be an $n$-gon and let $d(P)$ be the minimum number of diagonals required to diagonalize $P$ into convex polygons.  The polytopal complex $\K_P$ has dimension $n - 3 - d(P)$.
\end{cor}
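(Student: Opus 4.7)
The plan is to read off the dimension of $\K_P$ directly from the isomorphism of face posets established in Theorem~\ref{t:subcomplex}, combined with the standard dictionary between the number of diagonals in a diagonalization and the dimension of the corresponding face of the associahedron.

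First I would recall that, by Theorem~\ref{t:convexass}, the face poset of $\K_n$ is isomorphic to $\pi(Q)$ for a convex $n$-gon $Q$. Under this isomorphism, vertices of $\K_n$ correspond to triangulations of $Q$ (which use $n-3$ diagonals), and more generally a face corresponding to a diagonalization with $k$ diagonals has dimension $n-3-k$. This is because adding a single diagonal refines the diagonalization and passes to a face of codimension one, so the dimension drops by one for each added diagonal, bottoming out at dimension $0$ when the full $n-3$ diagonals of a triangulation are present.

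Next I would apply Theorem~\ref{t:subcomplex}: the faces of $\K_P$ are precisely those faces of $\K_n$ corresponding (via $\phi$) to convex diagonalizations of $P$. Therefore
\[
\dim(\K_P) \;=\; \max_{t \in \pi(P)} \dim(\phi(t)) \;=\; \max_{t \in \pi(P)} \bigl(n - 3 - |t|\bigr) \;=\; n - 3 - \min_{t \in \pi(P)} |t|,
\]
where $|t|$ denotes the number of diagonals of $t$. By the very definition of $d(P)$, the minimum on the right equals $d(P)$, giving $\dim(\K_P) = n - 3 - d(P)$.

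There is no real obstacle here; the only thing to verify carefully is that the minimum is actually attained, i.e.\ that $\pi(P)$ is nonempty. This is immediate because $P$ can always be convex-diagonalized (for instance by triangulating it), so some convex diagonalization with a minimal number of diagonals exists and contributes a top-dimensional face of $\K_P$.
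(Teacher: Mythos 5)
Your proof is correct and follows essentially the same route as the paper: both arguments use the fact that a face of $\K_n$ corresponding to a diagonalization with $k$ diagonals has dimension $n-3-k$, and then read off $\dim(\K_P)$ as the maximum over faces coming from $\pi(P)$ via the injection $\phi$, which is attained at a minimal convex diagonalization. Your version simply spells out the max/min bookkeeping and the nonemptiness of $\pi(P)$ more explicitly than the paper does.
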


\begin{proof}
The dimension of a polytopal complex is defined as the maximum dimension of any face. In the associahedron $\K_n$, a face of dimension $k$ corresponds to a convex diagonalization with $n-3-k$ diagonals.  The result follows since $\phi$ is an injection.
\end{proof}

\begin{exmp}
Figure~\ref{f:3d-6gons} shows two polytopal complexes $\K_P$ for the respective polygons $P$ given.  The left side is the 3-dimensional associahedron $\K_6$ based on diagonalizations of a convex hexagon, whereas the right side is the 2D polytopal complex of a deformed hexagon, made from two edges glued to opposite vertices of a square.  Note how this complex appears as a subcomplex of $\K_6$.

\begin{figure}[h]
\includegraphics[width=.9\textwidth]{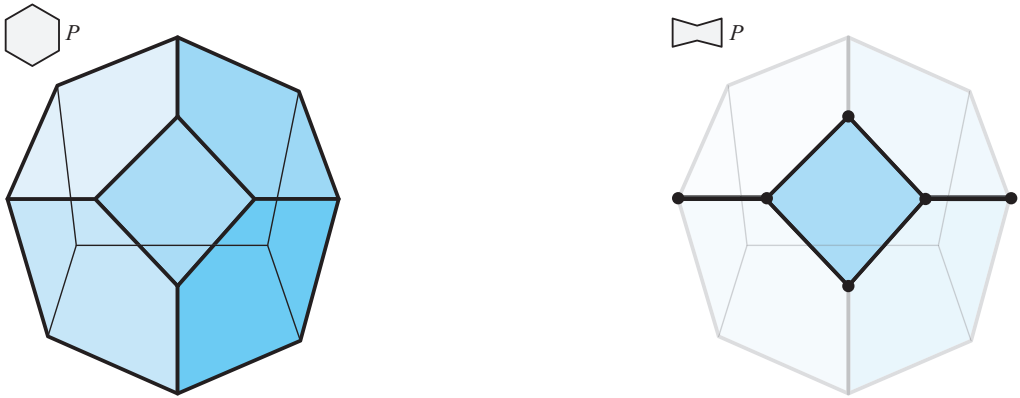}
\caption{Two polytopal complexes $\K_P$ based on the respective polygons $P$ given.}
\label{f:3d-6gons}
\end{figure}

\noindent The diagram given in Figure~\ref{f:6gonflip} shows the labeling of the complex given on the right side of Figure~\ref{f:3d-6gons}.  Note the number of diagonals in each diagonalization is constant across the dimensions of the faces.

\begin{figure}[h]
\includegraphics[width=.8\textwidth]{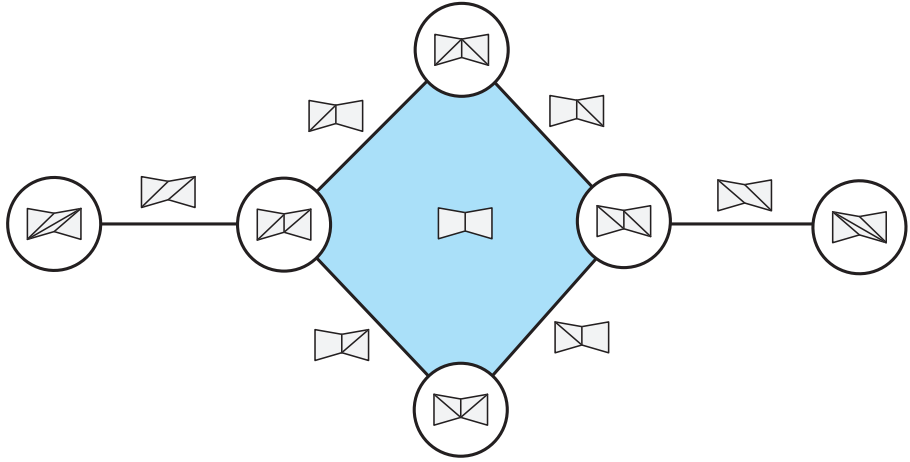}
\caption{The labeling of the face poset structure of $\K_P$ given on the right side of Figure~\ref{f:3d-6gons}.}
\label{f:6gonflip}
\end{figure}
\end{exmp}

\subsection{} \label{ss:altconstruct}

An alternate construction of $\K_P$ comes from \emph{removing} certain faces of $\K_n$:  Each \emph{facet} of $\K_n$ corresponds to a diagonal of $Q$.  Now consider the set $d_{Q \setminus P}$ of diagonals of $Q$ which are not diagonals of $P$.  If a facet $f$ of $\K_n$ corresponds to a diagonal of $d_{Q \setminus P}$,  remove $f$ along with the interior of any face $g$ where $f \cap g$ is nonempty.  Notice that this deletes every face that does not correspond to a convex diagonalization of $P$ while preserving all faces that do. It is easy to see that we are left with a polytopal complex $\K_P$.  Since for every two intersecting diagonals there is a third diagonal intersecting neither, any two facets are separated by at most one facet.  We therefore have the following:

\begin{lem}
The subcomplex of $\K_n$ which is removed to form $\K_P$ is connected.
\end{lem}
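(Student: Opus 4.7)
The plan is to show the removed subcomplex, call it $S$, is path-connected. By construction, $S$ consists of all closed faces of $\K_n$ whose associated diagonalization of $Q$ uses at least one diagonal in $d_{Q\setminus P}$; its maximal cells are the facets $f_d$ of $\K_n$ indexed by bad diagonals $d$. Two such facets $f_{d_1}$ and $f_{d_2}$ have nonempty intersection in $\K_n$ precisely when $d_1$ and $d_2$ do not cross, in which case their intersection is the codimension-two face corresponding to $\{d_1, d_2\}$, which automatically lies in $S$ since it is contained in $f_{d_1} \subseteq S$. Thus $S$ is path-connected if and only if the non-crossing graph on bad diagonals is connected, so it suffices to show that any two bad diagonals can be joined by a path of pairwise non-crossing bad diagonals.

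If two bad diagonals $d_1, d_2$ do not cross, we are done; assume they cross, and write $d_1=(q_a,q_b)$, $d_2=(q_c,q_d)$ with cyclic order $a,c,b,d$ on $Q$. The fact invoked just above the lemma -- that any two crossing diagonals of $Q$ have a third crossing neither -- supplies some $d_3$ non-crossing with both. In fact, $d_3$ may always be taken from among the four \emph{corner chords} $(q_a,q_c), (q_c,q_b), (q_b,q_d), (q_d,q_a)$ (at least one of which is a genuine diagonal of $Q$ when $n\geq 5$), since each such chord shares an endpoint with both $d_1$ and $d_2$ and therefore crosses neither.

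The main step, which I expect to be the crux, is to verify that at least one of these corner chords is itself bad, thereby providing the desired intermediate in $S$. I plan to argue by contradiction: assume all four segments $p_a p_c,\, p_c p_b,\, p_b p_d,\, p_d p_a$ lie in the interior of $P$. They form a closed 4-cycle in the simply connected region $P$, so by the Jordan--Schoenflies theorem the bounded region enclosed by this cycle also lies in $P$. When the 4-cycle is a simple closed curve, the enclosed region is a planar quadrilateral on $\{p_a,p_b,p_c,p_d\}$ whose two diagonals are precisely $p_a p_b = d_1$ and $p_c p_d = d_2$, and the standard fact that every simple quadrilateral (convex or non-convex) contains at least one of its diagonals in its interior forces one of $d_1, d_2$ to lie in $P$, contradicting the assumption that both are bad. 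The delicate subcase is when the 4-cycle self-intersects; here the chords $p_a p_c$ and $p_b p_d$ must cross at an interior point $X\in P$, which partitions the planar convex hull of the four points into four triangles, two of which lie in $P$ by Jordan--Schoenflies applied to the two simple sub-loops. I expect a parallel planar analysis of the remaining two triangles to again force one of $d_1$ or $d_2$ to be good, completing the contradiction and hence the proof.
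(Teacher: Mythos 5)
Your route is genuinely different from the paper's, and considerably more demanding. The paper's construction removes each bad facet $f_d$ \emph{together with the interiors of all faces of $\K_n$ that meet it}; consequently, when two bad diagonals $d_1,d_2$ cross and $d_3$ is any diagonal of $Q$ crossing neither, the interior of $f_{d_3}$ is already part of the removed set (it meets $f_{d_1}$), whether or not $d_3$ lies in $d_{Q\setminus P}$. So the paper only needs the purely combinatorial fact that any two crossing diagonals of a convex polygon admit a third crossing neither, and the lemma follows in one line. You instead set out to prove the stronger assertion that the bad diagonals themselves form a connected noncrossing graph; this is a genuine geometric statement about $P$ (and a nice one), and your reduction to ``some corner chord of two crossing bad diagonals is bad'' together with the simple-quadrilateral case is essentially correct. (Two small points there: a corner chord with consecutive labels is an edge of $P$, hence automatically good, so the bad corner chord your contradiction produces is automatically a genuine diagonal of $Q$ --- that, rather than ``at least one is a diagonal when $n\ge 5$,'' is the observation you need; and you should note that $\mathrm{int}(D)\subseteq\mathrm{int}(P)$ because $\partial P$ cannot enter $\mathrm{int}(D)$ without crossing one of the four good sides.)

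The genuine gap is the self-intersecting subcase, which you explicitly leave as ``I expect a parallel planar analysis,'' and the plan you sketch there would fail. The two remaining triangles of the convex hull ($Xp_ap_b$ and $Xp_cp_d$, whose outer sides are exactly $d_1$ and $d_2$) are precisely the ones you have no control over: nothing forces them into $P$, so no analysis of them will make $d_1$ or $d_2$ good. The correct resolution is that this subcase is vacuous. If two of the four corner segments cross, both must be diagonals of $P$ (an edge of $P$ cannot cross another edge or a diagonal), so they cross at a point $Y\in\mathrm{int}(P)$, and the four half-segments from $Y$ to $p_a,p_b,p_c,p_d$ are internally disjoint arcs from an interior point to four boundary points of the disk $P$. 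Hence the cyclic order of $p_a,p_c,p_b,p_d$ on $\partial P$ must agree with the angular order of these arcs around $Y$, which places the two endpoints of each crossing segment opposite one another. But the actual cyclic order $a,c,b,d$ places $a$ opposite $b$ and $c$ opposite $d$, i.e., it would force the crossing pair to be $\{d_1,d_2\}$ rather than a pair of corner chords --- a contradiction. With that replacement your argument closes up; as written, the proof is incomplete at exactly the step you flagged.
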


Although the complement of $\K_P$ in $\K_n$ is connected, it is not immediate that $\K_P$ itself is connected.  Consider a diagonalization of a polygon $P$.  An \emph{edge flip} of $P$  (called \emph{flip} for short) removes a diagonal of $P$ and replaces it with another noncrossing diagonal of $P$.  The \emph{flip graph} of $P$ is a graph whose nodes are the set of triangulations of $P$, where two nodes $x$ and $y$ are connected by an arc if one diagonal of $x$ can be flipped to obtain $y$.  It is a classical result of computational geometry that the flip graph of any polygon is connected; see \cite{bh} for an overview.  Since the flip graph of $P$ is simply the 1-skeleton of $\K_P$, we obtain:

\begin{thm}
$\K_P$ is connected for any $P$.
\end{thm}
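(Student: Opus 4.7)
The plan is to reduce the connectedness of the entire polytopal complex $\K_P$ to the connectedness of its 1-skeleton, and then to identify that 1-skeleton with the classical flip graph cited in the excerpt. The reduction step rests on the observation that every convex diagonalization of $P$ can be refined to a triangulation of $P$: any non-triangular piece of a convex diagonalization, being itself a convex polygon, admits a triangulation, and unioning such refinements with the original diagonals produces a triangulation of $P$ which sits below the original in $\pi(P)$. Translating this through $\pi(P) \cong \K_P$, every face of $\K_P$ has a vertex in its closure, so connectedness of the 1-skeleton will force connectedness of $\K_P$.

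Next I would identify the 1-skeleton of $\K_P$ explicitly. A vertex of $\K_P$ corresponds to a triangulation of $P$ (the maximal-refinement elements of $\pi(P)$, using that every triangle is convex), and an edge of $\K_P$ corresponds to a convex diagonalization with one diagonal removed from a triangulation: the unique non-triangular piece is a quadrilateral whose convexity is exactly what allows the flipped diagonal to be replaced by the other diagonal of that quadrilateral while staying inside $P$. Thus the edges of the 1-skeleton record precisely the edge flips of $P$ in the sense defined just above the theorem. So the 1-skeleton of $\K_P$ coincides with the flip graph of $P$.

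Finally, I would invoke the classical result of computational geometry (for instance \cite{bh}) that the flip graph of triangulations of an arbitrary simple polygon is connected, using any standard strategy: pick a reference triangulation (say, a fan triangulation from an ear vertex, which exists by the two-ears theorem), and show that an arbitrary triangulation can be transformed into the reference one by a sequence of flips through convex quadrilaterals. Combined with the reduction above, this yields connectedness of $\K_P$.

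The delicate point, and the step I expect to require the most care, is verifying that the flips needed to connect two triangulations of $P$ stay inside $\K_P$ — that is, that each intermediate flip is performed across a \emph{convex} quadrilateral (otherwise the flipped diagonal would leave $P$ or cross its boundary, and the flip would not correspond to an edge of $\K_P$). This is precisely the content of the classical simple-polygon flip theorem as opposed to the easier convex case, and once cited it closes the argument.
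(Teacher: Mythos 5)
Your proposal is correct and follows essentially the same route as the paper: identify the 1-skeleton of $\K_P$ with the flip graph of $P$, invoke the classical connectivity of the flip graph for simple polygons (as in \cite{bh}), and note that every face of $\K_P$ contains a vertex so that connectivity of the 1-skeleton suffices. You simply spell out the refinement-to-triangulation step and the convex-quadrilateral identification of edges, which the paper leaves implicit.
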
 

\noindent The right side of Figure~\ref{f:k5} along with Figure~\ref{f:6gonflip} display flip graphs of the appropriate polygons.

%
%
\section{Topological Properties}
\subsection{}

We begin this section by considering arbitrary (not just convex) diagonalizations of $P$ and the resulting geometry of $\K_P$.  Let $\di = \{d_1, \ldots, d_k\}$ be a set of noncrossing diagonals of $P$, and let $\K_P(\di)$ be the collection of faces in $\K_P$ corresponding to all diagonalizations  of $P$ containing $\di$. 

\begin {lem}
$\K_P(\di)$ is a polytopal complex.
\end{lem}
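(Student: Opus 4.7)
The plan is to show that $\K_P(\di)$ is a subcomplex of the polytopal complex $\K_P$ constructed in Theorem~\ref{t:subcomplex}. Since every face in $\K_P$ is already a convex polytope, it suffices to verify the two defining properties of a polytopal complex: closure under taking faces, and that the intersection of any two members is a (possibly empty) common face of each.

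Closure under faces would be handled using the order-reversing correspondence between face containment in $\K_n$ and the refinement order in $\pi(P)$. A subface $F'$ of some $F \in \K_P(\di)$ corresponds to a diagonalization obtained from that of $F$ by adding further noncrossing diagonals. Since the diagonal set indexing $F$ already contains $\di$, so does that indexing $F'$, whence $F' \in \K_P(\di)$.

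For pairwise intersections, let $F_1, F_2 \in \K_P(\di)$ have respective diagonal sets $D_1, D_2 \supseteq \di$. Since $\K_P$ is already a polytopal complex, $F_1 \cap F_2$ is a common face of each; when nonempty, the meet structure of the associahedron's face lattice identifies it with the face indexed by $D_1 \cup D_2$. This union clearly contains $\di$, so the intersection face itself lies in $\K_P(\di)$.

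I do not expect significant obstacles; the argument is essentially bookkeeping built on the dictionary between the face poset of $\K_n$ and the poset $\pi(Q)$ for a convex $Q$. The only point requiring mild care is confirming that when $D_1 \cup D_2$ is noncrossing the resulting partition of $P$ is again convex --- but each cell is an intersection of convex cells arising from the two given diagonalizations, which is automatic.
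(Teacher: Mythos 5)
Your proposal is correct and follows essentially the same route as the paper: closure under faces via the refinement order on $\pi(P)$ (a subface's diagonal set still contains $\di$), and identification of $F_1 \cap F_2$ with the face indexed by $D_1 \cup D_2 \supseteq \di$. The paper's proof is just a more condensed version of this same bookkeeping, so there is nothing to add.
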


\begin {proof}
If $t$ is a diagonalization of $P$ containing $\di$, then any $t'$ in $\pi(P)$ must also contain $\di$ if $t' \prec t$.  Thus there must be a face in $\K_P(\di)$ that corresponds to $t'$.  Furthermore, consider faces $f_1$ and $f_2$ in $\K_P(\di)$ corresponding to diagonalizations $t_1$ and $t_2$ of $P$. Then the intersection of $f_1$ and $f_2$ must correspond to a diagonalization including $\di$ since $f_1 \cap f_2$ is the (possibly empty) face corresponding to all convex diagonalizations that include every diagonal of
$t_1$ and $t_2$.
\end{proof}

The following lemma is an immediate consequence of the construction of $\K_P(\di)$.  It shows the gluing map between two faces of $\K_P$.

\begin {lem} \label {l:glue}
Let $\di_1$ and $\di_2$ be two collections of noncrossing diagonals of $P$.  Then $\K_P(\di_1)$ and $\K_P(\di_2)$ are glued together in $\K_P$ along the (possibly empty) polytopal subcomplex $\K_P(\di_1 \cup \di_2)$.
\end{lem}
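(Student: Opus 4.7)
The plan is to reduce the gluing statement to a set-theoretic equality $\K_P(\di_1) \cap \K_P(\di_2) = \K_P(\di_1 \cup \di_2)$ of subcomplexes of $\K_P$. The previous lemma already tells us each side is a polytopal subcomplex, so the content of the claim lies entirely in showing that their face sets coincide. Throughout, I would use the identification of faces of $\K_P$ with elements of $\pi(P)$ from Theorem~\ref{t:subcomplex}: a face $f$ lies in $\K_P(\di)$ precisely when the convex diagonalization $t \in \pi(P)$ corresponding to $f$ contains every diagonal of $\di$.

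With this dictionary in hand, both inclusions become immediate. For $\K_P(\di_1 \cup \di_2) \subseteq \K_P(\di_1) \cap \K_P(\di_2)$, any diagonalization $t$ containing $\di_1 \cup \di_2$ contains each $\di_i$ separately, so the corresponding face lies in both $\K_P(\di_1)$ and $\K_P(\di_2)$. Conversely, if $f$ lies in $\K_P(\di_1) \cap \K_P(\di_2)$ and corresponds to $t$, then $\di_1 \subseteq t$ and $\di_2 \subseteq t$, so $\di_1 \cup \di_2 \subseteq t$ and $f \in \K_P(\di_1 \cup \di_2)$.

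The main subtlety — and the reason for the "possibly empty" parenthetical in the statement — is the degenerate case in which $\di_1 \cup \di_2$ contains a pair of crossing diagonals. In that case no convex diagonalization of $P$ can contain $\di_1 \cup \di_2$, so $\K_P(\di_1 \cup \di_2)$ is empty by convention; but then no diagonalization $t$ can simultaneously contain $\di_1$ and $\di_2$ either, so $\K_P(\di_1) \cap \K_P(\di_2)$ is also empty. This handles the boundary case uniformly with the generic one, and once addressed there is essentially nothing more to verify. I do not anticipate any real obstacle beyond noting this degenerate case cleanly.
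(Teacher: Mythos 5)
Your proof is correct and follows exactly the route the paper intends: the paper states this lemma without proof, calling it ``an immediate consequence of the construction of $\K_P(\di)$,'' and your argument simply spells out that immediate consequence via the dictionary between faces and diagonalizations containing $\di$. The handling of the degenerate crossing case matches the paper's parenthetical ``possibly empty'' and requires no further justification.
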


\begin{thm} \label{t:product}
Suppose the diagonals $\di = \{d_1, \ldots, d_k\}$ divide $P$ into polygons $Q_0, \ldots, Q_k$. Then $\K_P(\di)$ is isomorphic to the cartesian product ${\K}_{Q_0} \times \cdots \times {\K}_{Q_k}.$
\end{thm}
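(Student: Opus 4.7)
The plan is to exhibit an explicit face-poset isomorphism by decomposing diagonalizations of $P$ along the diagonals in $\di$. First I would establish the key geometric observation: any diagonal $d'$ of $P$ which is noncrossing with every $d_i \in \di$ must lie entirely inside a single subpolygon $Q_j$. This is because $P \setminus \bigcup d_i$ has the $Q_j$'s (with boundaries removed along $\di$) as its connected components; a segment $d'$ with vertex endpoints that is disjoint from the interiors of the $d_i$'s cannot pass from one component to another. Consequently, for any $t \in \pi(P)$ with $\di \subseteq t$, the diagonals $t \setminus \di$ split uniquely as a disjoint union $t_0 \sqcup \cdots \sqcup t_k$, with $t_j$ a set of noncrossing diagonals of $Q_j$.

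Next I would check compatibility with convexity: the cells of the subdivision of $P$ determined by $t$ coincide with the union of the cells of the subdivisions of the $Q_j$'s determined by the $t_j$'s. Thus $t$ is a convex diagonalization of $P$ if and only if each $t_j$ is a convex diagonalization of $Q_j$. This yields a well-defined bijection
\[
\Phi: \K_P(\di) \longrightarrow \pi(Q_0) \times \cdots \times \pi(Q_k), \qquad t \longmapsto (t_0,\ldots,t_k),
\]
with inverse $(t_0,\ldots,t_k) \mapsto \di \cup t_0 \cup \cdots \cup t_k$.

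I would then verify that $\Phi$ is a poset isomorphism. A covering relation $t \prec t'$ in $\K_P(\di)$ corresponds to deleting a single diagonal of $t'$ that does not belong to $\di$; by the first step, this diagonal lies in exactly one $Q_j$, so the covering relation corresponds to a covering relation in the factor $\pi(Q_j)$ with all other components fixed. This is precisely the covering relation in the product poset. Since by Theorem~\ref{t:subcomplex} the face poset of each $\K_{Q_j}$ is $\pi(Q_j)$, and the Cartesian product of polytopal complexes has face poset equal to the product of their face posets, $\Phi$ induces the desired isomorphism $\K_P(\di) \cong \K_{Q_0} \times \cdots \times \K_{Q_k}$.

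The main obstacle is the initial geometric claim that a noncrossing diagonal of $P$ disjoint from $\di$ is confined to a single $Q_j$. This follows from the planar topology of the subdivision (the $Q_j$'s are the closures of the connected components of $P \setminus \bigcup d_i$), but a little care is needed because $d'$ is permitted to share endpoints with elements of $\di$; in that case $d'$ emanates from a vertex of some $Q_j$ and, being disjoint from the relative interiors of the $d_i$'s, cannot reenter any other component. Once this is in hand, the remainder is bookkeeping on posets.
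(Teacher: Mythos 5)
Your proposal is correct and rests on the same idea as the paper's own proof: a convex diagonalization of $P$ containing $\di$ decomposes uniquely into convex diagonalizations of the pieces $Q_0, \ldots, Q_k$, and this correspondence respects the poset structure. The paper organizes this as an induction on $k$, splitting off one diagonal $d_k$ at a time, whereas you give the global bijection directly and explicitly justify the key geometric point that each remaining diagonal is confined to a single $Q_j$ --- a point the paper leaves implicit --- so the two arguments differ only in bookkeeping.
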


\begin{proof}
We use induction on $k$. When $k = 1$, any face $f \in \K_P(d)$ corresponds\footnote{We abuse notation by writing $\K_P(d) = \K_P(\{d\})$.} to a convex diagonalization of $Q_0$ paired with a convex diagonalization of $Q_1$.  Thus, a face of ${\K}_{Q_0} \times {\K}_{Q_1}$ exists for each pair of faces $(f_0,f_1)$, for $f_0 \in {\K}_{Q_0}$ and $f_1 \in {\K}_{Q_1}$.  For $k > 1$, order the diagonals such that $d_{k}$ divides $P$ into polygons $Q_* = Q_0\cup\cdots\cup Q_{k-1}$ and $Q_{k}$. A face in $\K_P(\di)$ corresponds to a convex diagonalization $t_1$ of $Q_*$ and a convex diagonalization $t_2$ of $Q_{k}$. The pair $(t_1,t_2) \in \K_P(\di)$ corresponds to a face in $\K_{Q_*}(\di \setminus d_k) \times \K_{Q_{k}}$. By the induction hypothesis, $\K_{Q_*}(\di \setminus d_k)$ is isomorphic to 
${\K}_{Q_0} \times {\K}_{Q_1} \times \ldots {\K}_{Q_{k-1}}.$
\end{proof}

\begin{cor} \label{c:prodass}
If the diagonals $\di$ divide $P$ into \emph{convex} polygons with  $Q_0, \ldots, Q_k$ where $Q_i$ has $n_i$ edges, then $\K_P(\di)$ is the product of \emph{associahedra} $\K_{n_0} \times \cdots \times \K_{n_k}$. 
\end{cor}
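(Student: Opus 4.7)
The plan is to apply Theorem~\ref{t:product} and then reduce each factor $\K_{Q_i}$ to a standard associahedron by invoking Theorem~\ref{t:convexass}. Since the set-up of the corollary adds only one hypothesis to Theorem~\ref{t:product}—namely, that each piece $Q_i$ produced by $\di$ is convex—the work amounts to identifying $\K_{Q_i}$ with $\K_{n_i}$ under that extra hypothesis.

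First I would write, by direct application of Theorem~\ref{t:product},
\[
\K_P(\di) \ \cong\ \K_{Q_0} \times \K_{Q_1} \times \cdots \times \K_{Q_k}.
\]
Next I would observe the crucial point: when $Q_i$ is convex, \emph{every} diagonalization of $Q_i$ automatically cuts $Q_i$ into convex sub-polygons (any diagonal of a convex polygon produces two convex pieces, and the property is preserved under further subdivision). Consequently the poset $\pi(Q_i)$ of convex diagonalizations of $Q_i$ coincides with the poset of all diagonalizations of $Q_i$. Theorem~\ref{t:convexass} then tells us that $\pi(Q_i)$ is the face poset of the associahedron $\K_{n_i}$, and by the construction in Theorem~\ref{t:subcomplex} (where $\K_{Q_i}$ is defined as the subcomplex of $\K_{n_i}$ realizing $\pi(Q_i)$), we conclude $\K_{Q_i} = \K_{n_i}$. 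Substituting into the product yields the claim.

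The only step requiring care is the identification $\K_{Q_i} = \K_{n_i}$: one must check that when $\phi : \pi(Q_i) \hookrightarrow \pi(\text{convex }n_i\text{-gon})$ is the map from Theorem~\ref{t:subcomplex}, it is a \emph{bijection} in the convex case (not merely an injection), so that no faces of the ambient $\K_{n_i}$ get deleted in passing to the subcomplex $\K_{Q_i}$. This is exactly the observation above about convexity being preserved by diagonalization, so there is no genuine obstacle; the argument is essentially a direct corollary as advertised.
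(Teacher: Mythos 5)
Your proposal is correct and matches the paper's intent: the corollary is stated without proof precisely because it follows immediately from Theorem~\ref{t:product} together with the observation that $\K_{Q_i} = \K_{n_i}$ when $Q_i$ is convex (since every diagonalization of a convex polygon is convex, so $\pi(Q_i)$ is the full face poset of the associahedron by Theorem~\ref{t:convexass}). Your extra care in checking that the injection $\phi$ becomes a bijection in the convex case is exactly the right point to verify.
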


\noindent Thus every face of $\K_P$ is a product of associahedra.  For a polytopal complex $\K_P$, the \emph{maximal} elements of its face poset $\pi(P)$ are analogous to facets of convex polytopes.  These elements of $\pi(P)$ are characterized by the following:

\begin{defn}
A face $f$ of $\K_P$ corresponding to a diagonalization $t \in \pi(P)$ is a \emph{maximal face} if there does not exist $t' \in \pi(P)$ such that $t \prec t'$.
\end{defn}

\noindent Thus a maximal face of $\K_P$ has a convex diagonalization of $P$ using the \emph{minimal} number of diagonals.  Figure~\ref{f:mindiags} shows a polygon $P$ along with six minimal convex diagonalizations of $P$.  As this shows, such diagonalizations may not necessarily have the same number of diagonals.

\begin{figure}[h]
\includegraphics[width=\textwidth]{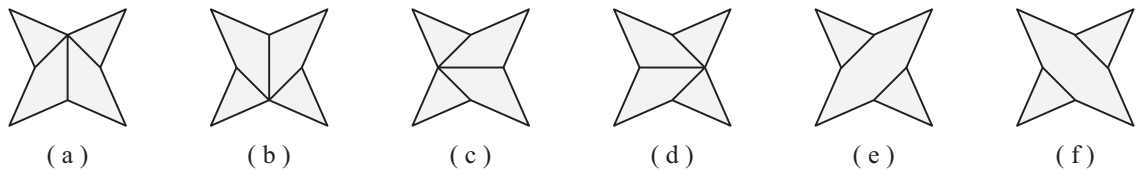}
\caption{Six minimal convex diagonalizations of a polygon.}
\label{f:mindiags}
\end{figure}

Corollary~\ref{c:prodass} shows that each maximal face is a product of associahedra.  Moreover, by Lemma~\ref{l:glue}, the maximal faces of $\K_P$ can be glued together to construct $\K_P$.   Figure~\ref{f:superpic} shows an example of $\K_P$ for the polygon shown.  It is a polyhedral subcomplex of the 5-dimensional convex associahedron $\K_8$.  We see that $\K_P$ is made of six maximal faces, four squares (where each square is a product $\K_4 \times \K_4$ of line segments) and two $\K_6$ associahedra.  Each of these six faces correspond to the minimal convex diagonalizations of $P$ given in Figure~\ref{f:mindiags}.

\begin{figure}[h] 
\includegraphics[width=.8\textwidth]{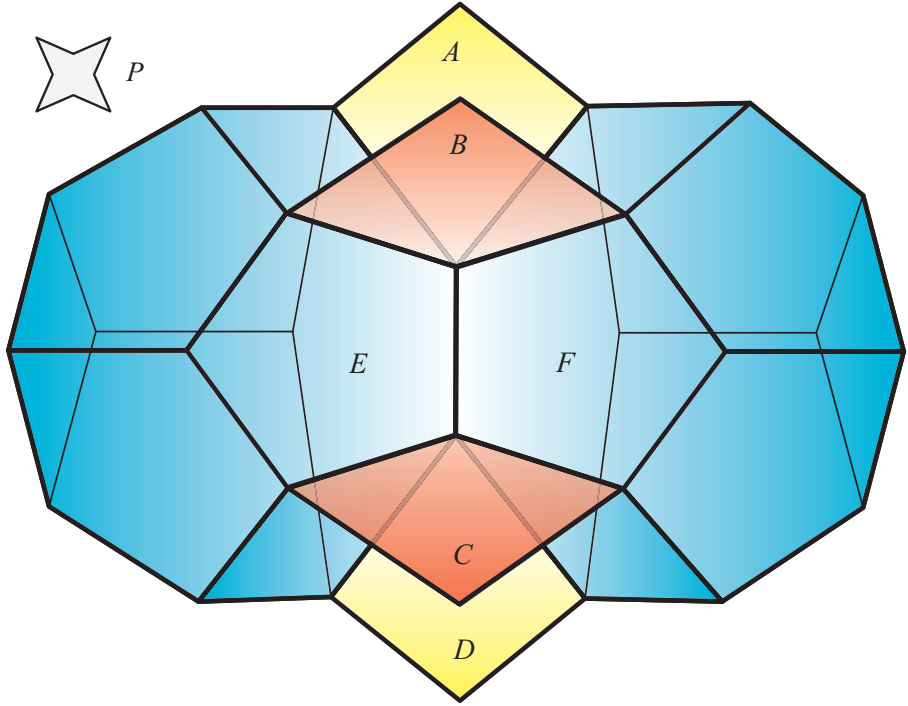}
\caption{An example of $\K_P$ seen as a 3-dimensional subcomplex of the 5-dimensional $\K_8$.  It is made of six maximal faces, four $\K_4 \times \K_4$ squares and two $\K_6$ associahedra.}
\label{f:superpic}
\end{figure}

\subsection{}

Having already shown that $\K_P$ is connected, we now prove that this polytopal complex is indeed contractible.  We start with some basic geometry:  A vertex of a polygon is called \emph{reflex} if the diagonal between its two adjacent vertices cannot exist.  Note that every nonconvex polygon has a reflex vertex.

\begin {lem} \label{l:reflexexistence}
For any reflex vertex $v$ of a nonconvex polygon $P$, every element of $\pi(P)$ has at least one diagonal incident to $v$.
\end {lem}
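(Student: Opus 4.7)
My plan is a short proof by contradiction, turning the defining property of a reflex vertex into a local angle argument.

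First I would suppose, towards contradiction, that some convex diagonalization $t \in \pi(P)$ has no diagonal incident to the reflex vertex $v$. Let $u$ and $w$ be the two vertices of $P$ adjacent to $v$, and consider the two polygon edges $uv$ and $vw$. Since neither of these edges is a diagonal of $t$ and since no diagonal of $t$ meets $v$, both $uv$ and $vw$ must be boundary edges of one and the same sub-polygon $Q$ in the diagonalization $t$; otherwise $v$ would need to lie on a diagonal of $t$ to separate them.

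Next I would do the key angle computation. Because no diagonal of $t$ is incident to $v$, the full interior angle of $P$ at $v$ is entirely contained in $Q$; in other words, the interior angle of $Q$ at $v$ equals the interior angle of $P$ at $v$. By the definition of a reflex vertex, the diagonal $uw$ cannot lie in the interior of $P$, which (for a vertex in general position) means the interior angle of $P$ at $v$ exceeds $\pi$. Therefore $Q$ has an interior angle greater than $\pi$ at $v$, and so $Q$ is not convex.

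This contradicts the assumption that $t$ is a \emph{convex} diagonalization. Hence every $t \in \pi(P)$ must contain at least one diagonal incident to $v$.

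The only mild obstacle is the step identifying $Q$: I would want to be explicit that a noncrossing diagonalization partitions the star of $v$ in $P$ into the stars of $v$ in the various sub-polygons meeting $v$, so if no diagonal touches $v$ there is exactly one such sub-polygon. Everything else is a direct consequence of the definitions of \emph{reflex} and \emph{convex diagonalization} as given in the paper, with no need for any result beyond Definition~\ref{d:poset} and the paragraph introducing reflex vertices.
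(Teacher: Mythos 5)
Your proposal is correct and follows exactly the paper's argument: assume no diagonal meets $v$, observe that $v$ then lies in a unique sub-polygon whose interior angle at $v$ equals that of $P$, and conclude that this sub-polygon is nonconvex since $v$ is reflex. You simply spell out the angle computation that the paper leaves implicit.
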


\begin {proof}
Assume otherwise and consider an element of $\pi(P)$.  In this convex diagonalization, since there is no diagonal incident to $v$, there exists a unique subpolygon containing $v$.  Since $v$ is reflex, this subpolygon cannot be convex, which is a contradiction.
\end {proof}

\begin {lem} \label{l:collection}
Let $F = \{f_1, f_2, \ldots, f_k\}$  be a set of faces of $\K_P$ such that $\bigcap_{F} f_i$ is nonempty.  If  $\bigcap_{F'} f_i$ is contractible for \emph{every} $F' \subset F$, then $\bigcup_{F} f_i$ is contractible.
\end{lem}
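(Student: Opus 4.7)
The plan is to proceed by induction on $k = |F|$. For the base case $k = 1$, a single face of $\K_P$ is, by Corollary~\ref{c:prodass}, a product of associahedra, hence a convex polytope (or product of such), hence contractible. For $k \geq 2$, I would set $A = \bigcup_{i<k} f_i$ and $B = f_k$. The collection $\{f_1,\ldots,f_{k-1}\}$ inherits the hypothesis from $F$: its total intersection contains $\bigcap_F f_i$ and so is nonempty, and contractibility of its subintersections is a special case of the data for $F$. Thus $A$ is contractible by induction and $B$ is contractible by the base case.

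The substantive step is to show $A \cap B$ is contractible. Distributing the intersection gives
\[
A \cap B \;=\; \bigcup_{i<k} (f_i \cap f_k),
\]
and each $f_i \cap f_k$ is itself a face of $\K_P$ because $\K_P$ is a polytopal complex. I would then reapply the induction hypothesis to the $k-1$ faces $\{f_i \cap f_k\}_{i<k}$: the total intersection equals $\bigcap_F f_i$ and is nonempty, while any subintersection has the form $\bigcap_{i \in I \cup \{k\}} f_i$ for some $I \subseteq \{1,\ldots,k-1\}$, which is contractible by the hypothesis applied to $\{f_i : i \in I \cup \{k\}\} \subseteq F$. Hence $A \cap B$ is contractible.

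To conclude that $A \cup B$ is contractible, I would invoke the standard CW gluing principle: Van Kampen (using that $A \cap B$ is nonempty and path-connected) gives $\pi_1(A \cup B) = 1$, and the Mayer--Vietoris sequence in reduced homology collapses to give $\tilde{H}_*(A \cup B) = 0$; Hurewicz and Whitehead then force $A \cup B$ to be contractible. The hard part will be this final gluing step, where one must confirm that $A$, $B$, $A \cap B$ form a CW-excisive triad so that Mayer--Vietoris and Van Kampen apply cleanly; this holds here because $A$ and $B$ are honest subcomplexes of the regular CW complex $\K_P$. A more conceptual alternative is to bypass the induction entirely and apply the Nerve Lemma directly: the hypothesis says every nonempty subcollection of $\{f_i\}$ has contractible intersection and that all such intersections are in fact nonempty, so the nerve is the full simplex $\Delta^{k-1}$, and $\bigcup_F f_i$ is homotopy equivalent to this contractible nerve.
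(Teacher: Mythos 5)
Your proof is correct and follows the same outer induction as the paper's --- peel off one face at a time --- but you supply the two ingredients the paper leaves implicit, and this is where the arguments genuinely diverge. The paper's inductive step simply asserts that because $f_{k+1}$ meets the nonempty contractible set $\bigcap_F f_i$, one may deformation retract $\bigcup_G f_i$ onto $\bigcup_F f_i$; no argument is given that $f_{k+1} \cap \bigl(\bigcup_F f_i\bigr)$ is contractible, which is exactly what such a retraction requires. You prove this missing fact by distributing the intersection over the union and re-applying the induction hypothesis to the $k-1$ faces $f_i \cap f_k$ (legitimate, since these are honest faces of the polytopal complex and every subintersection of them is one of the sets $\bigcap_{F'} f_i$ with $k \in F'$ assumed contractible), and you then replace the paper's asserted retraction with the Mayer--Vietoris/Van Kampen/Whitehead gluing for a CW triad. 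What your route buys is rigor: the nested induction is precisely where the full strength of the hypothesis (``for \emph{every} $F' \subset F$'') gets used, something the paper's one-line retraction obscures. What the paper's route buys is brevity and a concrete geometric picture, collapsing each new convex face onto the portion of the union it already touches. Your closing observation that the Nerve Lemma dispatches the whole statement in one step --- the hypotheses say exactly that $\{f_i\}$ is a good closed cover by subcomplexes whose nerve is the full simplex $\Delta^{k-1}$ --- is a genuinely different and arguably cleaner argument than either induction.
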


\begin{proof}
We prove this by induction on the number of faces in $F$. A single face is trivially contractible. Now assume  $\bigcup_{F} f_i$ is contractible.  For a face $f_{k+1} \notin F$ of $\K_P$, let $G = \{f_{k+1}\} \cup F$ so that $\bigcap_{G} f_i$ is nonempty. Since $f_{k+1}$ intersects the intersection of the faces $F$ and since this intersection is nonempty and contractible, we can deformation retract $\bigcup_G f_i$ onto $\bigcup_F f_i$ and hence maintain contractibility.
\end{proof}

\begin {thm} \label{t:contract}
For any polygon $P$, the polytopal complex $\K_P$ is contractible.
\end{thm}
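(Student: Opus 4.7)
My plan is induction on the number of vertices $n$ of $P$. The base cases are immediate: for $n=3$ the complex $\K_P$ is a single point, and whenever $P$ is convex Theorem~\ref{t:convexass} identifies $\K_P$ with the associahedron $\K_n$, which is a convex polytope and hence contractible.

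For the inductive step, assume $P$ is nonconvex and fix a reflex vertex $v$. By Lemma~\ref{l:reflexexistence}, every element of $\pi(P)$ contains at least one diagonal incident to $v$, so if $d_1,\ldots,d_m$ are all the diagonals of $P$ incident to $v$ then
\[
\K_P \;=\; \bigcup_{i=1}^m \K_P(d_i).
\]
Any two $d_i$ share the endpoint $v$ and are therefore noncrossing, so for every $S \subseteq \{1,\ldots,m\}$ the set $\{d_i : i\in S\}$ is a valid collection of noncrossing diagonals of $P$ and
\[
\bigcap_{i\in S}\K_P(d_i) \;=\; \K_P\bigl(\{d_i : i\in S\}\bigr).
\]
This intersection is nonempty because any noncrossing set of diagonals of a simple polygon extends to a triangulation, which is in particular a convex diagonalization.

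By Theorem~\ref{t:product}, each such intersection is isomorphic to a product $\K_{Q_0}\times\cdots\times \K_{Q_{|S|}}$, where the $Q_j$ are the subpolygons into which $\{d_i : i\in S\}$ slices $P$. Since $|S|\geq 1$, each $Q_j$ has strictly fewer than $n$ vertices, so the inductive hypothesis forces every $\K_{Q_j}$ to be contractible, and hence so is the product. Thus the cover $\{\K_P(d_i)\}$ and all of its multi-intersections are contractible (and nonempty), and Lemma~\ref{l:collection} delivers the contractibility of $\K_P$.

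The one step that will require a brief comment is the invocation of Lemma~\ref{l:collection}, which is stated for faces of $\K_P$ rather than for subcomplexes such as $\K_P(d_i)$. This is a cosmetic obstacle rather than a real one: its proof by deformation retraction uses only contractibility of the pieces and of their intersections, so it applies verbatim to any family of contractible subcomplexes with contractible multi-intersections. (Equivalently, one could cite the nerve lemma for the good cover $\{\K_P(d_i)\}$ once the multi-intersection computation above is in hand.)
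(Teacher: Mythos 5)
Your proposal is correct and follows essentially the same route as the paper: induction on the number of vertices, covering $\K_P$ by the subcomplexes $\K_P(d_i)$ for the diagonals incident to a reflex vertex (via Lemma~\ref{l:reflexexistence}), identifying the multi-intersections as $\K_P(\di')$ and using Theorem~\ref{t:product} plus the inductive hypothesis to see they are contractible, and finishing with Lemma~\ref{l:collection}. Your closing remark that Lemma~\ref{l:collection} is stated for faces but its retraction argument applies to arbitrary contractible subcomplexes with contractible intersections is a fair observation about a point the paper itself passes over silently.
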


\begin {proof}
We prove this by induction on the number of vertices.  For the base case, note that $\K_P$ is a point for any triangle $P$.  Now let $P$ be a polygon with $n$ vertices.   If $P$ is convex, then $\K_P$ is the associahedron $\K_n$ and we are done.  For $P$ nonconvex, let $v$ be a reflex vertex of $P$.  Since each diagonal $d$ of $P$ incident to $v$  seperates $P$ into two smaller polygons $Q_1$ and $Q_2$, by our hypothesis, $\K_{Q_1}$ and $\K_{Q_2}$ are contractible.  Theorem \ref{t:product} shows that $\K_P(d)$ is isomorphic to $\K_{Q_1} \times \K_{Q_2}$, resulting in $\K_P(d)$ to be contractible.

Let $\di = \{d_1, \ldots, d_k\}$ be the set of all diagonals incident to $v$.  Since $\di$ is a set of \emph{noncrossing} diagonals, then $\bigcap_{\di} \K_P(d)$ is nonempty.  Furthermore, for any subset $\di' \subset \di$, we have $\K_P(\di') = \bigcap_{\di'} \K_P(d).$  By Theorem \ref{t:product}, this is a product of contractible pieces, and thus itself is contractible.  Therefore, by Lemma \ref{l:collection}, the union of the complexes $\K_P(d_i)$ is contractible.   However, since Lemma \ref{l:reflexexistence} shows that this union is indeed $\K_P$, we are done.  
\end{proof}

\begin{rem}
A recent paper by Braun and Ehrenborg \cite{be} considers a similar complex for nonconvex polygons $P$.  Their complex $\theta(P)$ is a \emph{simplicial} complex with vertex set the diagonals in $P$ and facets given by triangulations of $P$.  Indeed, it is easy to see that $\theta(P)$ is the combinatorial dual of $\K_P$.  Figure~\ref{f:be-dual} shows an example of $\theta(P)$ on the left and $\K_P$ on the right, as depicted in Figure~\ref{f:6gonflip}.  
\begin{figure}[h]
\includegraphics[width=.9\textwidth]{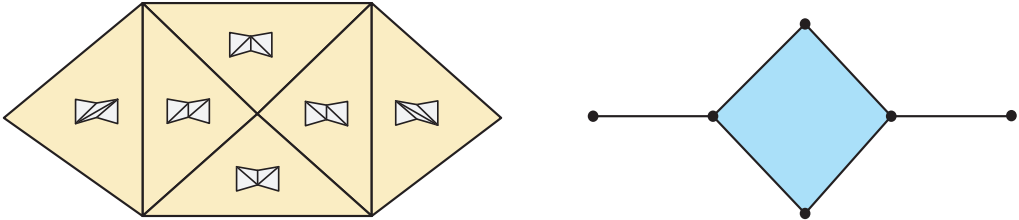}
\caption{The simplicial complex $\theta(P)$ and its corresponding dual $\K_P$.}
\label{f:be-dual}
\end{figure}
For a nonconvex polygon $P$ with $n$ vertices, the central result in \cite{be} shows that $\theta(P)$ is homeomorphic to a ball of dimension $n-4$.  This is, in fact, analogous to Theorem~\ref{t:contract} showing $\K_P$ is contractible.  The method used in \cite{be} is based on a \emph{pairing lemma} of Linusson and Shareshian \cite{sl}, motivated by discrete Morse theory, whereas our method considers the geometry based on reflex vertices.
\end{rem}

\subsection{}

The previous constructions and arguments can be extended to include planar polygons with holes.  These \emph{generalized polygons} are bounded, connected planar regions whose boundary is the disjoint union of simple polygonal loops. Vertices, edges, and diagonals are defined analogously to that for polygons.  In what follows, denote $R$ as a generalized polygon with $n$ vertices and $h+1$ boundary components.  Similar to Definition~\ref{d:poset}, one can define $\pi(R)$ to be the set of all convex diagonalizations of $R$ partially ordered by inclusion.  We state the following and leave the straight-forward proof\footnote{Recall the classical result that the number of diagonals in a triangulation of $R$ is $n+3h-3$.}    to the reader:

\begin{cor}
There exists a polytopal complex $\K_R$ whose face poset is isomorphic with $\pi(R)$.  Moreover, the dimension of $\K_R$ is $n+3h-d(R)-3$, where $d(R)$ is the minimum number of diagonals required to diagonalize $R$ into convex polygons.
\end{cor}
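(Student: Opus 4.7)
My plan is to imitate the constructions leading to Theorem~\ref{t:subcomplex} and Theorem~\ref{t:product}, building $\K_R$ directly from products of associahedra rather than as a subcomplex of any ambient $\K_n$. For each maximal element $t \in \pi(R)$ (a minimal convex diagonalization whose diagonals $\di$ split $R$ into convex subpolygons $Q_0, \ldots, Q_k$ with $n_i$ sides), I declare the associated maximal face of $\K_R$ to be $\K_{n_0} \times \cdots \times \K_{n_k}$, motivated by Corollary~\ref{c:prodass}. By the face-poset structure of a product of associahedra, the faces of this product are in bijection with convex refinements of $t$ in $\pi(R)$, so the incidence structure within each maximal face is automatic.

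I then glue these maximal faces along the common subfaces prescribed by Lemma~\ref{l:glue}: for two maximal elements $t_1, t_2 \in \pi(R)$, the shared subfaces correspond to convex diagonalizations containing $t_1 \cup t_2$ as a set of diagonals, and form the polytopal subcomplex $\K_R(t_1 \cup t_2)$. The identifications are unambiguous because each shared subface is itself a product of associahedra whose factors depend only on the underlying subdivision; applying Theorem~\ref{t:product} inductively to the subregions (that proof never used convexity or simple-connectedness of $P$) shows the factorizations on either side agree. This produces a well-defined polytopal complex $\K_R$ whose face poset is exactly $\pi(R)$.

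For the dimension formula I invoke the classical count that a triangulation of $R$ uses exactly $n+3h-3$ diagonals. Such a triangulation is a minimal element of $\pi(R)$ and corresponds to a vertex of $\K_R$. A convex diagonalization using $m$ diagonals lies in the face poset above some triangulation obtained by $n+3h-3-m$ further flips, so the corresponding face has dimension $n+3h-3-m$. The largest such dimension is achieved on minimal convex diagonalizations, giving $\dim(\K_R) = n+3h-3-d(R)$.

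The main potential obstacle is the well-definedness of the gluing without the convenient injection $\pi(P) \hookrightarrow \pi(Q)$ into a convex $n$-gon's associahedron that powered Theorem~\ref{t:subcomplex}; there is in general no single convex polygon whose diagonalizations realize all of $\pi(R)$. I would handle this by iterating: cut $R$ along a single diagonal of a chosen minimal convex diagonalization, apply Theorem~\ref{t:product} to factor the corresponding face, and reduce to simpler generalized polygons until each factor is a convex polygon where Theorem~\ref{t:convexass} applies. Since every convex diagonalization of $R$ can be reached by such a sequence of cuts, the local product descriptions assemble consistently into the global complex.
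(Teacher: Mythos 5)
The paper itself gives no proof of this corollary---it is explicitly ``left to the reader,'' with only the footnote recalling that a triangulation of $R$ has $n+3h-3$ diagonals---but the discussion immediately following it (cutting $R$ along a diagonal joining two boundary components, as in Figure~\ref{f:holesplit}, and gluing the resulting pieces) indicates the intended route, and your cut-and-glue strategy is essentially that route. Your dimension count is also the intended one: a convex diagonalization with $m$ diagonals cuts $R$ into $m+1-h$ convex pieces with $\sum n_i = n+2m$ sides in total, so the corresponding product of associahedra has dimension $\sum (n_i-3) = n+3h-3-m$, maximized at $m=d(R)$. (You should say ``adding $n+3h-3-m$ further diagonals,'' not ``flips''; a flip preserves the number of diagonals.)

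The one step that fails as written is your appeal to Theorem~\ref{t:product} in the reduction, together with the parenthetical claim that its proof ``never used convexity or simple-connectedness of $P$.'' It did use the latter: the induction there is anchored on the fact that each diagonal $d_k$ \emph{divides} the region into two subpolygons $Q_*$ and $Q_k$. A diagonal of a generalized polygon joining the outer boundary to a hole---and every convex diagonalization of $R$ must contain such non-separating diagonals---does not separate $R$ at all; it reduces the number of boundary components by one while keeping the region connected, so there is no product decomposition to induct on. The missing ingredient is exactly the bijection of Figure~\ref{f:holesplit}: for such a diagonal $d$, the convex diagonalizations of $R$ containing $d$ correspond to those of the region $R'$ obtained by doubling $d$ into two boundary edges, so $\K_R(d)$ is isomorphic to $\K_{R'}$ (a single ``factor,'' not a product); iterating this $h$ times reduces to honest simple polygons, where Theorems~\ref{t:subcomplex} and~\ref{t:product} do apply. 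With that repair your gluing argument goes through combinatorially. One further point you leave untouched (as does the paper): the definition of a polytopal complex here is geometric, and since $\K_R$ is in general \emph{not} a subcomplex of any associahedron (see the remark following Figure~\ref{f:polyhole}), the abstract gluing of products of associahedra still owes a realization by actual convex polytopes meeting face-to-face.
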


Moreover, since there is at least one reflex vertex in any generalized polygon (with more than one boundary component), the proof of the following is identical to that of Theorem~\ref{t:contract}:

\begin{cor}
$\K_R$ is contractible for a generalized polygon $R$. In particular, $\K_R$ is connected.
\end{cor}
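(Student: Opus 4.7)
The plan is to transcribe the proof of Theorem~\ref{t:contract} to the generalized-polygon setting, replacing ``induction on the number of vertices'' with induction on the quantity $n+3h$, which (by the parenthetical observation preceding the corollary) bounds the number of diagonals in any triangulation of $R$ and strictly decreases under every operation performed below. The base cases are handled already: if $h=0$ and $R$ is a convex polygon, $\K_R = \K_n$ is an associahedron and hence contractible; if $h=0$ and $R$ is nonconvex, the argument of Theorem~\ref{t:contract} applies unchanged. So the only genuinely new case is $h \geq 1$.

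For the inductive step with $h \geq 1$, pick a reflex vertex $v$ of $R$ (at least one exists) and let $\di = \{d_1, \ldots, d_k\}$ be all diagonals of $R$ incident to $v$. Each $d_i$ falls into one of two types: (A) it joins two vertices of the same boundary loop, splitting $R$ into two generalized polygons $R_1$ and $R_2$ whose complexities sum to strictly less than $n+3h$; or (B) it joins $v$ to a vertex on a different boundary loop, yielding a single generalized polygon $R'$ with one fewer boundary component. The natural extension of Theorem~\ref{t:product} then identifies $\K_R(d_i)$ with $\K_{R_1}\times \K_{R_2}$ in case (A) and with $\K_{R'}$ in case (B); in either situation the factors are contractible by the inductive hypothesis, so $\K_R(d_i)$ is contractible.

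For any subset $\di' \subseteq \di$, the diagonals of $\di'$ are automatically noncrossing (they share the endpoint $v$), so $\K_R(\di') = \bigcap_{d \in \di'} \K_R(d)$ is nonempty and, by the same product argument, contractible. Lemma~\ref{l:collection} then yields that $\bigcup_{i=1}^{k} \K_R(d_i)$ is contractible. The generalized-polygon analogue of Lemma~\ref{l:reflexexistence}---every element of $\pi(R)$ has a diagonal incident to a fixed reflex vertex $v$, since otherwise the unique subregion of $R$ containing $v$ would be nonconvex---shows that this union is all of $\K_R$, completing the induction. Connectedness is then immediate since contractible spaces are connected.

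The principal obstacle is not the inductive combinatorics but rather the verification that Theorem~\ref{t:product} and Lemma~\ref{l:reflexexistence} survive the passage to generalized polygons, especially the hole-merging case (B), where a diagonal decreases $h$ without disconnecting $R$. Concretely, one must check that convex diagonalizations of $R$ containing $d_i$ correspond poset-isomorphically to convex diagonalizations of the generalized polygon $R'$ obtained by cutting $R$ open along $d_i$; this parallels the injection argument used in the proof of Theorem~\ref{t:subcomplex}, but needs to be spelled out carefully because $R'$ has a different number of boundary loops than $R$. Once these two generalizations are in place, the rest of the argument is a direct transcription of the proof of Theorem~\ref{t:contract}.
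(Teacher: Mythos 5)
Your proposal is correct and follows essentially the same route as the paper, which simply observes that a generalized polygon with a hole has a reflex vertex and declares the proof ``identical to that of Theorem~\ref{t:contract}''; you usefully make explicit the two points the paper glosses over, namely the hole-merging diagonals of type (B) (which the paper only treats afterwards, via Figure~\ref{f:holesplit}) and the generalized versions of Theorem~\ref{t:product} and Lemma~\ref{l:reflexexistence}. One small arithmetic slip: in case (A) the complexities of $R_1$ and $R_2$ sum to $n+2+3h$, not to something less than $n+3h$; what the induction actually needs, and what does hold since each piece has at least $3$ vertices, is that each piece individually has complexity at most $n+3h-1$.
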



\noindent Figure~\ref{f:polyhole} shows an example of the associahedron of a pentagon with a triangular hole, whose maximal faces are 8 cubes and 3 squares.
\begin{figure}[h]
\includegraphics[width=\textwidth]{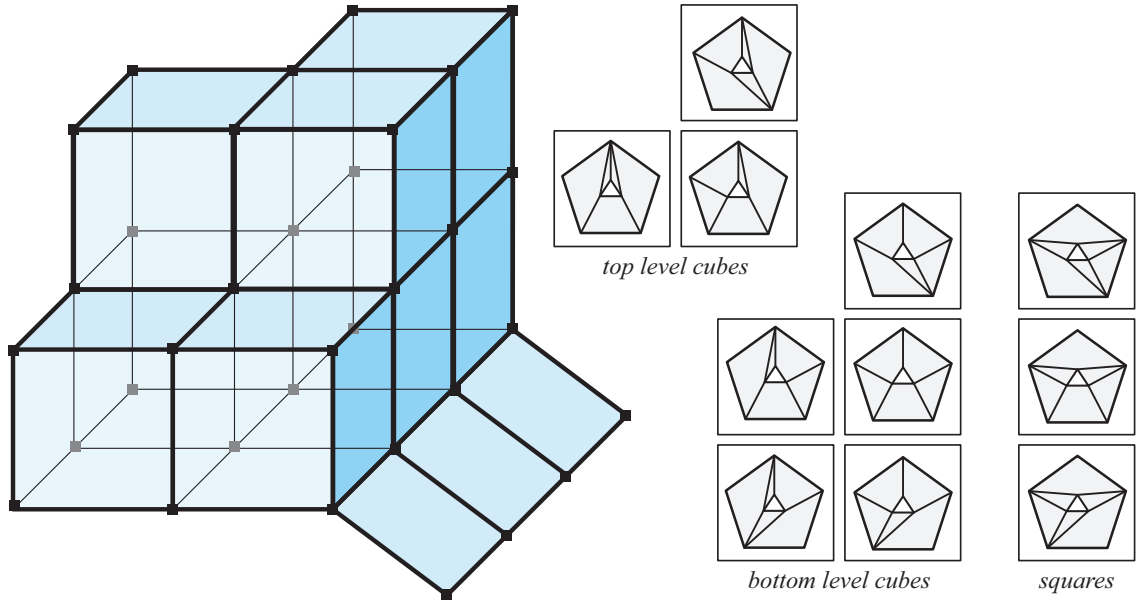}
\caption{The associahedron for a pentagon with a triangular hole, along with its 11 maximal faces.}
\label{f:polyhole}
\end{figure}
Similar to the polygonal case, the complexes $\K_R$ can be obtained by gluing together different polytopal complexes $\K_P$, for various polygons $P$.  For simplicity, we now describe this in the case when $R$ has only two boundary components.

Let $\di$ be the set of diagonals of $R$ connecting one vertex of the outside boundary region of $R$ with one vertex of the inside hole of $R$.  Observe that for any diagonal $d$ in $\di$, there exists a nonconvex polygon $P$ such that $\K_P$ is isomorphic to $\K_R(d)$.   The reason is that there is a bijection between all the convex diagonalizations of $R$ containing $d$ and all the convex diagonalizations of $P$ obtained by splitting the diagonal $d$ into two edges in the polygonal boundary, as given in Figure~\ref{f:holesplit}.  Since any convex diagonalization of $R$ contains at least one diagonal in $\di$, then $\K_R$ is constructed from gluing elements of $\{\K_R(d) \ | \ d \in \di\}$.  Now if the number of boundary components of $R$ is more than two, a similar argument as above can be applied to reduce the number of boundary components by one, and induction will yield the result.

\begin{figure}[h]
\includegraphics[width=.9\textwidth]{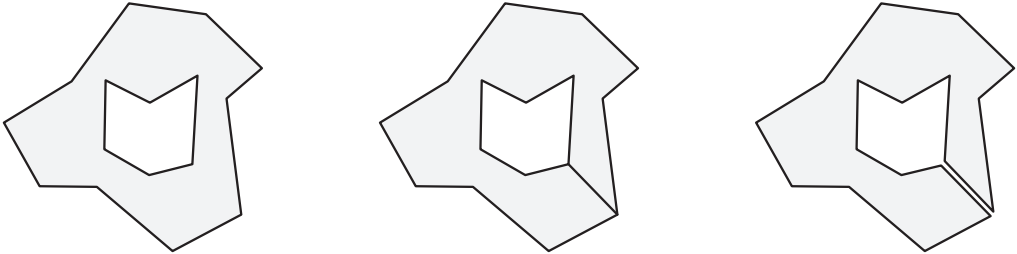}
\caption{A diagonal of a polygon with a hole leads to a nonconvex polygon without a hole.}
\label{f:holesplit}
\end{figure}

\begin{rem}
In order for an $n$-gon with a $k$-gon hole to \emph{not} have a polyhedral complex which is a subcomplex of $\K_{n+k+2}$, every diagonal from the hole to the exterior polygon must be optional (i.e. not required in every triangulation).  Otherwise, the construction given in Figure~\ref{f:holesplit} can be used to find such a map.  In particular, the complex given in Figure~\ref{f:polyhole} is not a subcomplex of $\K_{10}$.  Given any generalized polygon $R$, it is an interesting question to find the smallest $n$ such that $\K_R$ is a subcomplex of $\K_n$.
\end{rem}

%
%
\section{Geometric Realizations}
\subsection{}

This section focuses on providing a geometric realization of $\K_P$ with integer coordinates for its vertices.  There are numerous realizations of the classical associahedron \cite{lod} and its generalizations \cite{pos} \cite{cfz}.  We first recall the method followed in \cite{dev} for the construction of $\K_n$, altered slightly to conform to this paper.  This is introduced in order to contrast it with the secondary polytope construction which follows.

Let $P$ be a polygon and choose an edge $e$ of $P$ to be its base.  Any triangulation $T$ of $P$ has a dual structure as a rooted tree, where the root of the tree corresponds to the edge $e$.   For a triangle $\tr$ of $T$, let $r(\tr)$ be the collection of triangles of $T$ that have a path to the root (in the dual tree) containing $\tr$.  Figure~\ref{f:treeduality} gives three examples of the values $|r(\tr)|$ for triangles in different triangulations of  polygon; the triangle adjacent to the root is shaded.

\begin{figure}[h]
\includegraphics[width=.9\textwidth]{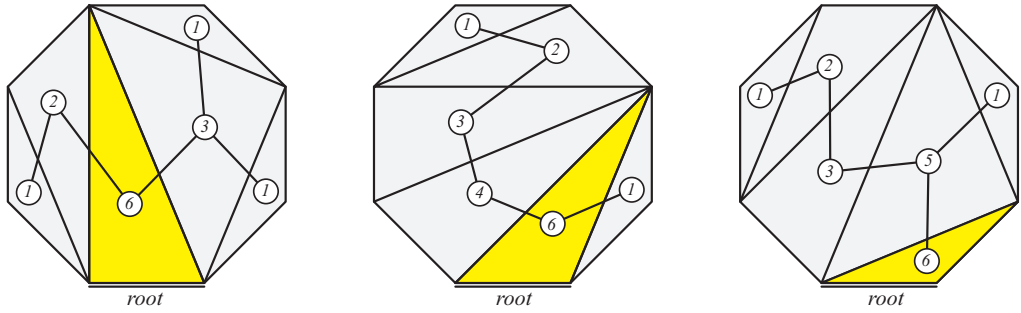}
\caption{Assigning values to triangles in different triangulations of a polygon.}
\label{f:treeduality}
\end{figure}

Define a value $\theta_T(\tr)$ for each triangle of $T$ using the recursive condition
\begin{equation}
\sum_{x \in r(\tr)} \theta_T(x) = 3^{|r(\tr)|-1}.
\label{e:integer}
\end{equation}
For each vertex $p_i$ of $P$, assign the value 
$$\widehat \theta_T(p_i) \ = \ \max \ \{ \ \theta_T(\tr) \ | \  p_i \in \tr \ \}.$$

\begin{thm} {\textup{\cite[Section 2]{dev}}} \label{t:realass}
Let $p_1, \ldots, p_n$ be vertices of a convex polygon $P$ such that $p_{n-1}$ and $p_n$ are the two vertices incident to the root edge $e$ of $P$.  The convex hull \footnote{Given a finite set of points $S$ in $\R^n$, we define the \emph{convex hull} of $S$ to be the smallest convex set containing $S$, whereas the \emph{hull} of $S$ is the boundary of the convex hull.} of the points
$$( \ \widehat \theta_T(p_1), \ \ldots, \ \widehat \theta_T(p_{n-2}) \ )$$
in $\R^{n-2}$, as $T$ ranges over all triangulations of $P$, yields the associahedron $\K_n$.
\end{thm}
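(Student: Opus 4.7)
The plan is to show that the resulting convex hull is combinatorially equivalent to $\K_n$ by identifying explicit supporting hyperplanes indexed by the diagonals of $P$. First I would verify that the recursion in (\ref{e:integer}) uniquely determines each $\theta_T(\tr)$ by induction on the depth of $\tr$ in the dual tree rooted at $e$: a leaf triangle $\tr$ has $r(\tr)=\{\tr\}$, forcing $\theta_T(\tr)=1$, and an internal triangle with children $\tr_1,\tr_2$ satisfies $\theta_T(\tr) = 3^{|r(\tr)|-1} - 3^{|r(\tr_1)|-1} - 3^{|r(\tr_2)|-1}$. A short induction shows $\theta_T(\tr)>0$ throughout. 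Next I would check that the vertex count of the convex hull is the Catalan number $C_{n-2}$ (matching the vertex count of $\K_n$) and that every point $v_T=(\widehat\theta_T(p_1),\ldots,\widehat\theta_T(p_{n-2}))$ satisfies a single linear relation, confining the hull to an affine hyperplane of dimension at most $n-3$.

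The combinatorial heart of the argument is to produce, for each diagonal $d$ of $P$, a supporting hyperplane $H_d$ of the convex hull whose vertex set is exactly $\{v_T : d\in T\}$. Concretely, if $d$ cuts $P$ into subpolygons with vertex index sets $A$ and $B$ (overlapping only at the endpoints of $d$), I would look for a linear functional of the shape $\phi_d(x)=\sum_{i\in A}x_i$ and claim it attains a constant minimum on the hull precisely on those $v_T$ containing $d$, with minimum value expressible as a product of powers of $3$ determined by $|A|$ and $|B|$. Granting this, the face of the hull lying on $H_d$ is the convex hull of $\{v_T : d\in T\}$, which by Theorem~\ref{t:product} has the expected combinatorial type $\K_{|A|}\times \K_{|B|}$, the correct facet structure of $\K_n$.

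Once the hyperplane claim holds for each diagonal, the face poset of the convex hull matches $\pi(P)$: a family of noncrossing diagonals $\{d_1,\ldots,d_k\}$ determines a face $\bigcap_j H_{d_j}$, and this intersection is nonempty exactly when the diagonals are mutually noncrossing (so that a common triangulation refining them exists). Combined with the correct vertex count, this identifies the hull with $\K_n$.

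The main obstacle will be establishing the supporting hyperplane property for each $d$. This reduces to a careful bookkeeping of how the values $\theta_T$ and $\widehat\theta_T$ transform under a flip $T\leftrightarrow T'$ across a diagonal, and it is exactly here that the choice of base $3$ in the recursion becomes essential: since $3>2=$ branching factor of the dual tree, the recursion forces $\theta_T(\tr)>\theta_T(\tr_1)+\theta_T(\tr_2)$ for the two children, yielding the monotonicity needed to pin down which $T$ minimize $\phi_d$. I would encapsulate this into a local flip lemma giving $\phi_d(v_T)-\phi_d(v_{T'})$ in closed form, and then propagate the inequality globally using the flip-connectedness of triangulations of a convex polygon.
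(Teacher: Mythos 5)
The paper does not actually prove Theorem~\ref{t:realass}; it quotes it from \cite{dev}, so the only fair comparison is with the argument there, and your outline is essentially that argument: exhibit, for each diagonal $d$, a supporting hyperplane with a power-of-$3$ constant whose contact set is $\{v_T : d\in T\}$, and recover the face poset from noncrossing families of diagonals. The strategy is sound, but as written it has one concrete error and one missing step. The error is in the functional: if $A$ \emph{includes} the endpoints of $d$, then $\phi_d=\sum_{i\in A}x_i$ can degenerate. Take the pentagon with root edge $p_4p_5$ and $d=p_1p_3$: then $A=\{1,2,3\}$ and $\phi_d=x_1+x_2+x_3=3^{n-3}=9$ identically on the ambient hyperplane, so it supports nothing. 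The correct functional sums only over the vertices \emph{strictly} cut off by $d$ (the interior vertices of the subpolygon $Q$ on the side away from the root edge); writing $A^{\circ}$ for that set, one checks $\sum_{i\in A^{\circ}}\widehat\theta_T(p_i)=3^{|A^{\circ}|-1}$ exactly when $d\in T$ (these vertices are in bijection with the triangles of $Q$, whose $\theta$-values sum to $3^{|A^{\circ}|-1}$ by Eq.~\eqref{e:integer} applied to the triangle of $Q$ adjacent to $d$), and $>3^{|A^{\circ}|-1}$ otherwise. Your flip-lemma plan for the inequality is the right mechanism, but the inequality you need is attached to $A^{\circ}$, not $A$. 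A related small slip: your appeal to the paper's note that the root-edge coordinates are constant is not what the paper says (those two coordinates equal \emph{each other}, namely $\theta$ of the root triangle, but that common value varies with $T$ --- it is $7$ for the pentagon triangulation $\{p_2p_4,p_2p_5\}$ and $6$ for the others), so you cannot absorb endpoint coordinates into the constant.

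The missing step is at the end: producing one supporting hyperplane per diagonal, together with the Catalan vertex count, shows that $\K_n$'s face poset embeds into the face poset of the hull, but not that the hull has \emph{no other} facets. You need an additional argument, e.g.\ show that the polytope defined by the inequalities $\sum_{i\in A^{\circ}}x_i\ge 3^{|A^{\circ}|-1}$ (over all diagonals $d$) inside the hyperplane $\sum x_i=3^{n-3}$ has all of its vertices among the points $v_T$ --- equivalently, that every $0$-dimensional intersection of $n-3$ of these bounding hyperplanes that is feasible corresponds to a triangulation --- or run the truncation induction of \cite{dev}, which builds the polytope from the simplex one face at a time and verifies at each stage that the new inequality cuts exactly the intended face. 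With the corrected functional and this closing step supplied, your proof goes through and coincides with the cited one.
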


Notice that since both of the vertices adjacent to the root edge will have the same value $\widehat \theta$ for all triangulations, we simply ignore them.  Moreover, it is easy to see from Eq.~\eqref{e:integer} that this realization of $\K_n$ must lie in the hyperplane $\sum x_i = 3^{n-3}$ of $\R^{n-2}$.  Figure~\ref{f:realass} shows the values  $\widehat \theta$ given to each vertex in each triangulation, along with the convex hull of the five points.

\begin{figure}[h]
\includegraphics[width=.9\textwidth]{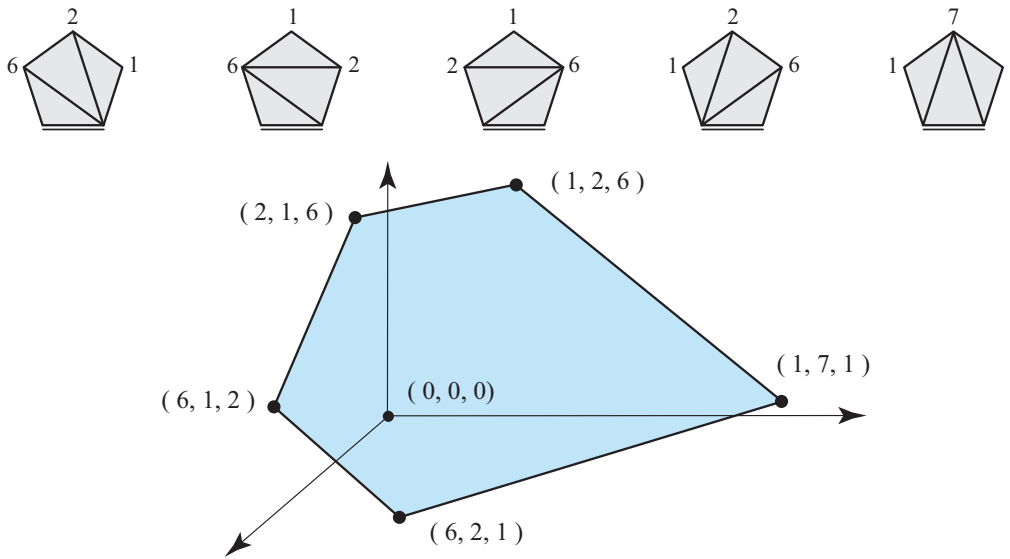}
\caption{An example of Theorem~\ref{t:realass} producing $\K_5$.}
\label{f:realass}
\end{figure}

\begin{rem}
For nonconvex polygons, since $\K_P$ is a subcomplex of $\K_n$ by Theorem~\ref{t:subcomplex}, the method above extends to a realization of $\K_P$ with integer coordinaters.
\end{rem}

\subsection{}

The notion of a secondary polytope was developed by Gelfand, Kapranov, and Zelevinsky \cite{gkz}.  We review it and show its applicability to $\K_P$.    Let $P$ be a polygon with vertices $p_1, \ldots, p_n$.  For a triangulation $T$ of $P$, let
$$\phi(p_i) \ = \ \sum_{p_i \in\Delta\in T} \text{area}(\Delta) $$
be the sum of the areas of all triangles $\Delta$ which contain the vertex $p_i$. Let the \emph{area vector} of $T$ be 
$$\Phi(T) \ = \ (\phi(p_1),\ldots,\phi(p_n)).$$

\begin{defn}
The \emph{secondary polytope} of $\Sigma(P)$ of a polygon $P$ is the convex hull of the area vectors of all triangulations of $P$. 
\end{defn} 

The following is a special case of the work on secondary polytopes:

\begin{thm}  {\textup{\cite[Chapter 7]{gkz}}}
If $P$ is a convex $n$-gon, the secondary polytope $\Sigma(P)$ is a realization of the associahedron $\K_n$.
\end{thm}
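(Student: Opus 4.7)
The approach is to specialize the general theory of secondary polytopes from \cite{gkz}: for any finite point configuration $A \subset \R^d$, the nonempty faces of $\Sigma(A)$ are in order-reversing bijection with the coherent (regular) polyhedral subdivisions of $A$ ordered by refinement, with vertices of $\Sigma(A)$ corresponding to coherent triangulations. The plan is to apply this to $A = \{p_1, \ldots, p_n\}$, the vertex set of a convex $n$-gon $P$, and identify the resulting face poset with $\pi(P)$, which by Theorem~\ref{t:convexass} is the face poset of $\K_n$.

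First I would identify the polyhedral subdivisions of $A$ with convex diagonalizations of $P$: since $A$ lies in convex position, any polyhedral subdivision of the convex hull of $A$ using only the given points is exactly a partition of $P$ into convex subpolygons by noncrossing diagonals, i.e., an element of $\pi(P)$. Because $\pi(P)$ is already defined so that refinement goes downward ($a \prec a'$ when $a$ refines $a'$), the order-reversal coming from GKZ produces precisely the direction in which face containment in $\Sigma(P)$ matches $\prec$ in $\pi(P)$. Granting coherence, this immediately gives the realization claim.

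The crux is therefore to prove that every convex diagonalization of $P$ is coherent. Given noncrossing diagonals $\{d_1,\ldots,d_k\}$ subdividing $P$ into convex pieces $Q_0,\ldots,Q_k$, I would construct a height function $\w: A \to \R$ whose lower convex envelope, viewed as a piecewise linear function on $P$, has domains of linearity exactly $Q_0,\ldots,Q_k$. My plan is inductive on $k$. For $k=0$ take $\w(p_i) = \langle v, p_i \rangle$ for a generic vector $v$, so the lower envelope is a single linear function on all of $P$. For the inductive step, given a height function realizing a subdivision $S$, pick a diagonal $d$ in $S$, let $H$ be a halfplane bounded by the line through $d$, and add a small positive multiple of $\max(0, \ell(x))$ (where $\ell$ is an affine function vanishing on the line through $d$ and positive on $H \cap P$) to $\w$, restricted to $A$. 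This introduces a new crease along $d$ while perturbing the existing creases only slightly; choosing the multiple sufficiently small preserves the existing subdivision on the complementary side.

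The main obstacle is this perturbation step: one must verify quantitatively that the successive adjustments decouple, so the lower envelope ends up with exactly one linear piece per subpolygon of $S$ and no spurious creases at vertices inside the interior of a subpolygon. This rests on two features of the setup: the diagonals are noncrossing (so the halfplanes used at each stage interact consistently), and $A$ is in convex position (so no vertex of $A$ becomes interior to the convex hull after lifting, preventing a lifted point from sinking below the envelope and forcing unwanted subdivisions). Once coherence is established for every element of $\pi(P)$, the GKZ correspondence combined with Theorem~\ref{t:convexass} completes the identification $\Sigma(P) \cong \K_n$.
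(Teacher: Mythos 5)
The paper does not actually prove this statement---it is quoted from \cite[Chapter 7]{gkz} as a known special case of the theory of secondary polytopes---so there is no internal proof to compare against; the closest relative in the paper is the subsequent theorem (that all area vectors of an arbitrary, possibly nonconvex, polygon lie on the hull of $\Sigma(P)$), which uses the same lifting-function idea but in the harder nonconvex setting and only to establish vertexhood, not the full face-poset identification. Your proposal correctly reconstructs the standard GKZ argument: faces of $\Sigma(A)$ correspond to regular (coherent) subdivisions, subdivisions of a configuration in convex position with no three collinear points are exactly the convex diagonalizations in $\pi(P)$, and the whole matter reduces to showing every such diagonalization is regular. That reduction is right, and your lifting construction works, but your ``main obstacle'' is not really an obstacle: there is no need for an induction with small perturbations. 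Simply set $\w = \sum_i \max(0,\ell_i)$ over all diagonals $d_i$ of the given diagonalization. Each summand is convex and piecewise linear, so the sum is too; because the line through a diagonal of a \emph{convex} polygon meets the polygon exactly in that diagonal, the creases of $\w$ inside $P$ are precisely the $d_i$, and creases of convex functions accumulate under addition rather than interfere, so no quantitative decoupling is required. Since $\w$ is affine on each piece $Q_j$ and strictly creased across each $d_i$, the lower envelope of the lifted vertices induces exactly the intended subdivision. With that, your identification of the face poset with $\pi(P)$ (note that with the paper's convention, where finer diagonalizations are smaller, the correspondence is in fact order-preserving) and an appeal to Theorem~\ref{t:convexass} complete the proof.
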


\noindent It is easy to see that all the area vectors of $P$ lie in an $(n-3)$-dimensional plane of $\R^n$.  However, it is not immediate that each area vector  is on the hull of $\Sigma(P)$.  

We show that the secondary polytope of a nonconvex polygon has all its area vectors on its hull, as is the case for a convex polygon.  However, since the secondary polytope of a nonconvex polygon is \emph{not} a subcomplex of the secondary polytope for convex polygon, this result is not trivial.\footnote{The original proof for the convex case uses a \emph{regular} triangulation observation; we modify this using a lifting map of the dual tree of the polygon.}

\begin{thm}
For any polygon $P$, all area vectors $\Phi(T)$ lie on the hull of $\Sigma(P)$.
\end{thm}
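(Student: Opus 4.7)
The plan is to show every area vector $\Phi(T)$ is a vertex of $\Sigma(P)$ by producing a weight vector $\omega \in \R^n$ that is strictly minimized over polygon triangulations $T'$ at $T' = T$. The identity
\[
\omega \cdot \Phi(T') \;=\; 3 \int_{P} f_{T',\omega},
\]
where $f_{T',\omega}$ is the piecewise linear function on $P$ that interpolates the heights $\omega_i$ using the triangulation $T'$, reduces the problem to finding heights such that the lifted surface for $T$ lies pointwise below the lifted surface for every other polygon triangulation of $P$.

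To construct $\omega$ I would use the dual tree of $T$, which is a tree since $P$ is simply connected. Root it at an arbitrary triangle $\Delta_0$ and set its three vertex heights to zero. Then process the remaining triangles in BFS order. When processing a non-root triangle $\Delta$ whose parent $\Delta_p$ shares diagonal $d=ab$ with it, let $u$ be the third vertex of $\Delta_p$ and $v$ the third vertex of $\Delta$. The crucial combinatorial observation is that $v$ has not yet been assigned a height: the triangles of $T$ containing $v$ form a connected subpath of the dual tree (they fan around $v$), so any earlier-processed triangle containing $v$ would lie on the tree-path from $\Delta_0$ to $\Delta$, forcing $v \in \Delta_p$ and contradicting the definition of $v$. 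Choose $\omega_v$ large enough that $(v,\omega_v)$ lies strictly above the plane through $(a,\omega_a),(b,\omega_b),(u,\omega_u)$; this makes $f_{T,\omega}$ strictly convex across $d$.

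After processing every triangle, $f_{T,\omega}$ is piecewise linear on $P$ and strictly convex across every interior diagonal of $T$. For any polygon triangulation $T'$ of $P$ and any triangle $\Delta' = p_a p_b p_c$ of $T'$, we have $\Delta' \subseteq P$, so for $x = \lambda_a p_a + \lambda_b p_b + \lambda_c p_c \in \Delta'$ convexity of $f_{T,\omega}$ on $\Delta'$ yields
\[
f_{T,\omega}(x) \;\leq\; \lambda_a \omega_a + \lambda_b \omega_b + \lambda_c \omega_c \;=\; f_{T',\omega}(x),
\]
with strict inequality on an open set whenever $\Delta'$ properly crosses an interior diagonal of $T$. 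Integrating over $P$ yields $\omega \cdot \Phi(T) < \omega \cdot \Phi(T')$ for every $T' \neq T$, so $\Phi(T)$ is a vertex of $\Sigma(P)$ and hence lies on its hull.

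The main obstacle is establishing the combinatorial lemma guaranteeing that each new vertex encountered along the dual tree traversal has not yet been assigned a height; once this subpath-in-tree property for vertex stars is in hand, the geometric half of the argument reduces to standard facts about convex piecewise linear functions on a (possibly nonconvex) polygonal domain.
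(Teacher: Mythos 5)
Your proposal is correct and takes essentially the same route as the paper: both arguments build a height function by walking the dual tree of $T$ and choosing each new height large enough that the lifted surface is locally convex across every diagonal, and both use the identity $\langle \omega, \Phi(T')\rangle = 3\int_P f_{T',\omega}$ (the volume under the lifted surface) to conclude that $\Phi(T)$ uniquely minimizes the linear functional $\langle\omega,\cdot\rangle$ over $\Sigma(P)$. You are somewhat more explicit than the paper at two points it leaves implicit --- the one-new-vertex-per-triangle lemma that makes the sequential height assignment well defined, and the Jensen-type argument showing $f_{T,\omega}\le f_{T',\omega}$ pointwise with strict inequality on an open set --- but the underlying proof is the same.
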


\begin{proof}
Fix a triangulation $T$ of $P$.  We first show that there is a height function $\w$ on $P$ which raises the vertices of $T$  to a locally convex surface in $\mathbb{R}^3$, that is, a surface which is convex on every line segment in $P$.  Choose an edge $e$ of $P$ to be its base so that the dual tree of $T$ is rooted at $e$. Starting from the root and moving outward, assign increasing numbers $m_i$ to each consecutive triangle $\tr_i$ in the tree. Define a height function
$$\w(p_i) \ = \ \min \ \{m_k \ | \ p_i \in \tr_k \ \}$$
for each vertex $p_i$ of $P$.
Observe that for every pair of adjacent triangles $\tr_1$ and $\tr_2$ (in the dual tree), we can choose the value $m_i$ to be large enough such that the planes containing $\w(\tr_1)$ and $\w(\tr_2)$ are distinct and meet in a convex angle; see Figure~\ref{f:height} below.

\begin{figure}[h]
\includegraphics[width=.8\textwidth]{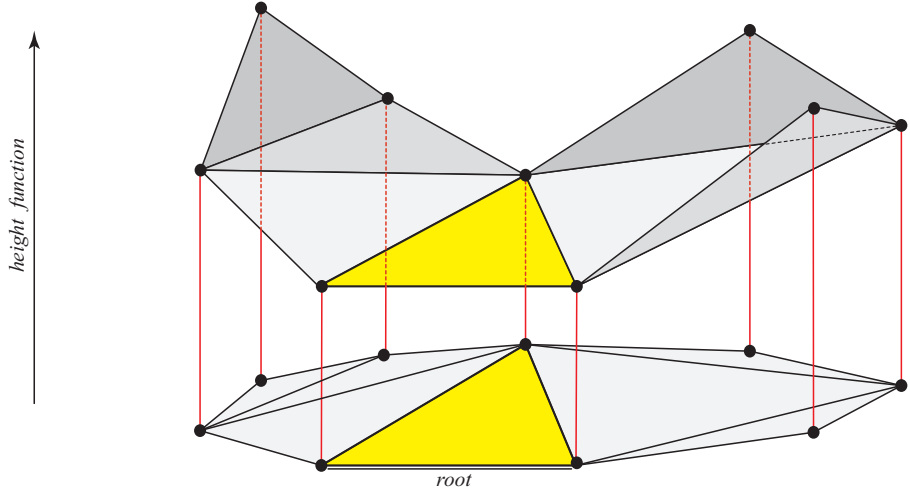}
\caption{A height function $\w$ lifting the triangles of $T$.}
\label{f:height}
\end{figure}

Now, in order to show that $\Phi(T)$ lies on the hull of $\Sigma(P)$, we construct a linear function $\rho(v)$ on $\Sigma(P)$ such that $\rho(\Phi(T))$ is a unique minimum of this function on $\Sigma(P)$.  For any $v$ in $\Sigma(P)$, define $\rho(v) = \langle \w(T), v \rangle$ to be the inner product of the vectors $v \in \R^n$ and 
$$\w(T) = (\w(p_1), \ldots, \w(p_n)).$$ For a triangle $\tr$ of $T$ with vertices $p_i, p_j, p_k$, consider the volume in $\R^3$ enclosed between $\tr$ and the lifted triangle $\w(\tr)$. This volume can be written as
$$\frac{\w(p_i) + \w(p_j) + \w(p_k)}{3} \ \text{area}(\tr).$$
The volume between the surface on which the $\w(p_i)$'s lie and the plane is given by
\begin{align*}
\sum_{\tr \in T} \ \frac{\w(p_i) + \w(p_j) + \w(p_k)}{3} \ \text{area}(\tr) \ =&  \ \sum_{i=1}^n \left[\frac{\w(p_i)}{3}\sum_{p_i\in\Delta\in T}\text{area}(\tr)\right]  \\
=& \ \sum_{i=1}^n \ \frac{\w(p_i)}{3} \ \phi(p_i) \\
=& \ \frac{1}{3} \ \langle \w(T), \Phi(T) \rangle.
\end{align*}
Since $\w$ lifts $T$ to a locally convex surface $S$, we know that $w$ will lift any $T'\neq T$ to a surface $S'$ above $S$. Thus $\langle \w(T), \Phi(T) \rangle <\langle \w(T), \Phi(T') \rangle$, implying all vertices of $\Sigma(P)$ lie on the hull.
\end{proof}

\begin{cor}
If $P$ is nonconvex, then a subset of the faces of $\Sigma(P)$ yield a realization of $\K_P$.
\end{cor}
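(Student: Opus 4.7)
The plan is to construct, for each convex diagonalization $a\in\pi(P)$, a face $F_a$ of $\Sigma(P)$ whose vertex set is exactly $\{\Phi(T):T\text{ is a triangulation of }P\text{ containing every diagonal of }a\}$, and then to verify that the family $\{F_a\}_{a\in\pi(P)}$, together with all their subfaces, forms a polytopal subcomplex of $\Sigma(P)$ whose face poset is isomorphic to $\pi(P)$.

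To produce $F_a$, I would build a height vector $\w_a\in\R^n$ whose associated linear functional on $\Sigma(P)$ is minimized precisely on $F_a$. Let $Q_0,\ldots,Q_k$ be the convex cells into which the diagonals of $a$ partition $P$. Because the diagonals of $a$ are noncrossing and $P$ is simply connected, the cell-adjacency graph is a tree. Root this tree at $Q_0$ and set $\w_a\equiv 0$ on $Q_0$. Traversing the tree outward, whenever one crosses a diagonal $d$ from a parent cell to a child cell $Q_j$, add to $\w_a$ on $Q_j$ and on all descendants of $Q_j$ the restriction of a single affine function on $\R^2$ that vanishes on $d$ and is strictly positive on the $Q_j$-side. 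The outcome is a continuous piecewise linear function $\w_a\colon P\to\R$ that is affine on each $Q_i$ and has a strictly convex crease along every diagonal of $a$; restrict it to the vertices of $P$ to obtain the desired $\w_a\in\R^n$.

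Mimicking the volume computation in the proof of the preceding theorem, $\langle \w_a,\Phi(T)\rangle$ equals three times the volume enclosed between $P$ and the surface $S_T$ obtained by affinely lifting each triangle of $T$ from the $\w_a$-heights of its vertices. If $T$ refines $a$, then every triangle of $T$ sits inside some $Q_i$ where $\w_a$ is affine, so $S_T$ coincides with the graph of $\w_a$ and the volume equals a common value $V$. If $T$ does not refine $a$, some triangle $\tr$ of $T$ crosses a diagonal $d$ of $a$, and its flat lift sits strictly above the graph of $\w_a$ on a set of positive area near the portion of $d$ inside $\tr$, giving volume strictly greater than $V$. Hence $\langle \w_a,\cdot\rangle$ attains its minimum on $\Sigma(P)$ exactly at $F_a:=\mathrm{conv}\{\Phi(T):T\text{ refines }a\}$, which is therefore a face.

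Finally, one checks that $a\mapsto F_a$ gives an order-preserving injection of $\pi(P)$ into the face lattice of $\Sigma(P)$: the vertices of $F_a$ are precisely the area vectors of triangulations refining $a$, so $F_{a_1}\cap F_{a_2}=F_{a_1\cup a_2}$ when $a_1\cup a_2$ is a valid (noncrossing) diagonalization and is empty otherwise, while $F_{a_1}\subseteq F_{a_2}$ iff $a_1\preceq a_2$ in $\pi(P)$. Theorem~\ref{t:product}, combined with the Gelfand-Kapranov-Zelevinsky identification of $\Sigma(Q_i)$ with $\K_{n_i}$ on each convex cell, factors $F_a$ as $\K_{Q_0}\times\cdots\times\K_{Q_k}\cong\K_P(a)$, producing the correct polytopal structure throughout. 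The main obstacle I anticipate is the recursive construction of $\w_a$: keeping the lift affine on each cell while achieving strict convexity across \emph{every} diagonal simultaneously requires that the affine correction added at one diagonal not spoil the convex creases already established at earlier diagonals, a compatibility condition that must be verified at each branching of the cell-dual tree.
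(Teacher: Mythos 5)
Your proposal is correct and follows essentially the same route as the paper: the paper's proof is exactly the one-line instruction to rerun the height-function argument of the preceding theorem with a lift that is affine on each cell of the convex diagonalization and strictly convex across its diagonals, which is precisely what your $\w_a$ accomplishes, together with the same volume computation. The compatibility condition you flag as the main obstacle is in fact automatic: the affine correction added when descending across a diagonal $d'$ is supported on the subtree of cells below $d'$, so it never alters the function on either of the two cells adjacent to a previously processed diagonal, and the convexity of a crease is a condition involving only those two cells.
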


\begin{proof}
For any face $f$ of $\K_P$, let $T_1, \ldots, T_k$ be the triangulations corresponding to the vertices of $f$.  We use the same argument as the theorem above to show there exists a height function $\w$  such that $\langle \w, \Phi(T) \rangle$ is constant for any $T \in \{T_1,\ldots,T_k\}$ and  $\langle \w, \Phi(T) \rangle < \langle \w, \phi(T') \rangle$ for any $T' \notin \{ T_1,\ldots, T_k \}$.
\end{proof}

%
%
\section{Visibility Graphs}
\subsection{}

We now consider the space of simple planar polygons through deformations.  Throughout this section, we only consider simple polygons with vertices labeled $\{1, 2, \ldots, n\}$ in this cyclic order.  As before, we assume the vertices of $P$ to be in general position, with no three collinear vertices.    Let us begin with the notion of visibility. 
 
\begin{defn}
The \emph{visibility graph} $\vis(P)$ of a labeled polygon $P$ is the labeled graph with the same vertex set as $P$, with $e$ as an edge of $\vis(P)$ if $e$ is an edge or diagonal of $P$.
\end{defn}

A classical open problem in visibility of polygons is as follows:

\begin{opp}
Given a graph $G$, find nice necessary and sufficient conditions which show if there exists a simple polygon $P$ such that $\vis(P) = G$.
\end{opp}

\begin{defn}
Two polygons $P_1$ and $P_2$ are \emph{$\vis$-equivalent} if  $\vis(P_1)=\vis(P_2)$.
\end{defn}

There is a natural relationship between the graph $\vis(P)$ and the polytopal complex $\K_P$:  If  polygons $P_1$ and $P_2$ are $\vis$-equivalent then  $\K_{P_1}$ and $\K_{P_2}$ yield the same complex.  We wish to classify polygons under a stronger relationship than $\vis$-equivalence.  For a polygon $P$, let $(x_i,y_i)$ be the coordinate of its $i$-th vertex in $\R^2$. We associate a point $\sg(P)$ in $\R^{2n}$ to $P$ where
$$\sg(P) \ = \ (x_1,y_1,x_2,y_2,\ldots,x_n,y_n).$$
Since $P$ is labeled, it is obvious that $\sg$ is injective but not surjective. 

\begin{defn}
Two polygons $P_1$ and $P_2$ are \emph{$\vis$-isotopic} if there exists a continuous map $f:[0,1]\longrightarrow \R^{2n}$ such that $f(0)=\sg(P_1)$, $f(1)=\sg(P_2)$, and for every $t\in [0,1]$, $f(t)=\sg(P)$ for some simple polygon $P$ where $\vis(P) = \vis(P_1)$.
\end{defn}

\noindent It follows from the definition that two polygons that are $\vis$-isotopic are $\vis$-equivalent.  The converse is not necessarily true:  Figure~\ref{f:visequivalent} shows three polygons along with their respective visibility graphs.  Parts (a) and (c) are $\vis$-equivalent (having identical visibility graphs) but cannot be deformed into one another without changing their underlying visibility graphs.  The middle figure (b) shows an intermediate step in obtaining a deformation.

\begin{figure}[h]
\includegraphics[width=\textwidth]{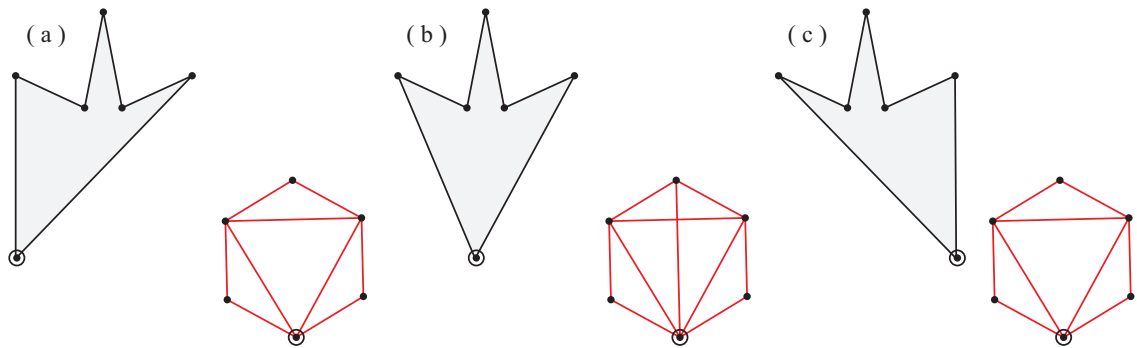}
\caption{Polygons (a) and (c) are $\vis$-equivalent but not $\vis$-isotopic.  The visibility graphs for each of the polygons are shown.}
\label{f:visequivalent}
\end{figure}

For a polygon $P$ with $n$ vertices, let $\dpos(P)$  be the $\vis$-isotopic equivalence class containing the polygon $P$ and let $\dpos$ be the set of all such equivalence classes of polygons with $n$ vertices. We give $\dpos$ a poset structure:  For two $n$-gons $P_1$ and $P_2$, the relation $\dpos(P_2) \prec \dpos(P_1)$ is given if the following two conditions hold:
\begin{enumerate}
\item $\vis(P_1)$ is obtained by adding one more edge to $\vis(P_2)$.
\item There exists a continuous map $f:[0,1]\longrightarrow \R^{2n}$, such that $f(0)=\sg(P_1)$, $f(1)=\sg(P_2)$, and for every $t\in [0,1/2)$, $f(t)=\sg(P)$ for some polygon $P$ with $\vis(P)=\vis(P_1)$, while for every $t\in (1/2,1]$, $f(t)=\sg(Q)$ for some polygon $Q$ with $\vis(Q)=\vis(P_2)$.
\end{enumerate}
If $P_1$ and $P_2$ are $\vis$-isotopic, let $\dpos(P_1) = \dpos(P_2)$.  Taking the transitive closure of $\preceq$ yields the \emph{deformation poset} $\dpos$.  A natural ranking exists on $\dpos$ based on the number of edges of the visibility graphs.

\begin{exmp}
Figure~\ref{f:deformpolygon} shows a subdiagram of the Hasse diagram for $\dpos$ for $6$-gons.  For the sake of presentation, we have forgone the labeling on the vertices.  A polygonal representative for each equivalence class is drawn, along with its underlying visibility graph.  Each element of $\dpos$ corresponds to a complex $\K_P$ as displayed in Figure~\ref{f:deformpolyhedra}.  Notice that as the polygon deforms and loses visibility edges, its associated polytopal complex collapses into a vertex of $\K_6$.
\end{exmp}

\begin{figure}[h]
\includegraphics[width=.9\textwidth]{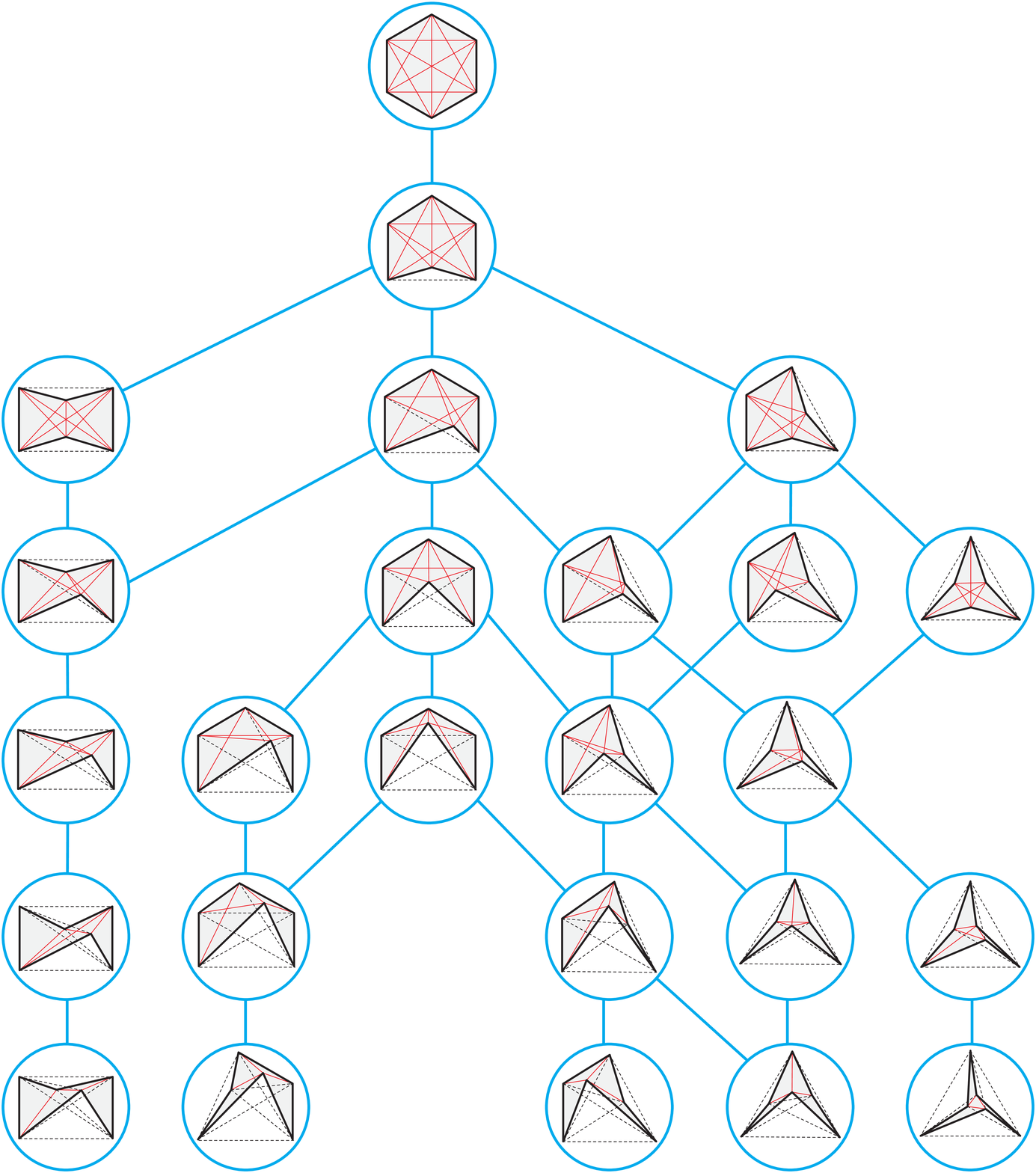}
\caption{A subdiagram of the Hasse diagram of $\dpos$ for $6$-gons.}
\label{f:deformpolygon}
\end{figure}

\begin{figure}[h]
\includegraphics[width=.9\textwidth]{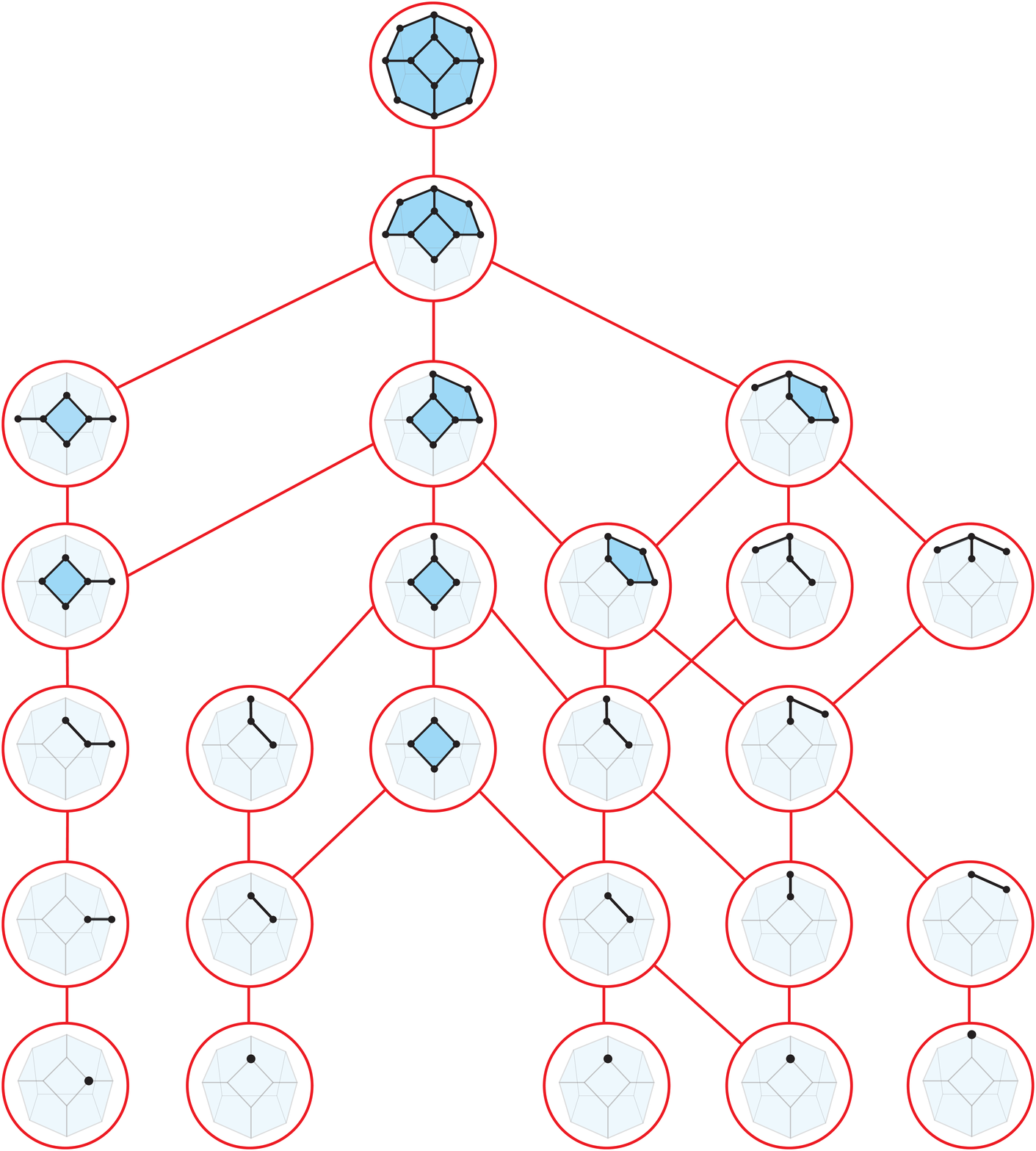}
\caption{The subcomplexes of $\K_6$ associated to the elements of Figure~\ref{f:deformpolygon}.}
\label{f:deformpolyhedra}
\end{figure}

\subsection{}

It is easy to see that the deformation poset $\dpos$ is connected:  Notice that $\dpos$ has a unique \emph{maximum} element corresponding to the convex polygon.  Given any polygon $P$ in the plane, one can  move its vertices, deforming $P$ into convex position, making each element of $\dpos$ connected to the maximum element.  Since the vertices of $P$ are in general position, we can insure that the visibility graph of $P$ changes only by one diagonal at a time during this deformation. However, during this process, the visibility graph of the deforming polygon might gain and lose edges, moving up and down the poset $\dpos$.  

We are interested in the combinatorial structure of the deformation poset beyond connectivity.  The maximum element of $\dpos$ corresponds to the convex $n$-gon (with $\binom{n}{2}$ edges in its visibility graph) whereas the minimal elements (which are not unique in $\dpos$) correspond to polygons with unique triangulations (with $2n-3$ edges in each of their visibility graphs). This implies that the height of the deformation poset is 
$$\binom{n}{2} - 2n + 4.$$
We pose the following problem and close this paper with a discussion of partial results:

\begin{dop}
Show that every maximal chain of $\dpos$ has length $\binom{n}{2} - 2n + 4$.
\end{dop}

We can rephrase this open problem loosely in the deformation context:  Does there exists a deformation of any simple polygon into a convex polygon such that throughout the deformation the visibility of the polygon monotonically increases?  And moreover, does there exists a deformation of any simple polygon into a polygon with a unique triangulation such that throughout the deformation the visibility of the polygon monotonically decreases?  

For the remaining part of the paper, we focus on the monotonically increasing segment of the deformation problem. A natural approach is to discretize this problem into moving vertices of the polygon one by one.  So a stronger claim is as follows:  For any (nonconvex) polygon, there exists a vertex which can be moved in the plane that preserves the visibility of vertices and introduces a new visibility.  In other words, for any polygon, there exists a vertex which can be moved such that we can always move up in rank in the poset.   Figure~\ref{f:counter} provides an elegant counterexample to this claim \cite{aic}, obtained by the participants of the first Mexican workshop on Computational Geometry (\verb+http://xochitl.matem.unam.mx/~rfabila/DF08/+).  A partial collection of the visibility edges of this polygon is given in red.  For this polygon, no vertex can be moved to increase visibility without first losing its current visibility.   

\begin{figure}[h]
\includegraphics[width=.7\textwidth]{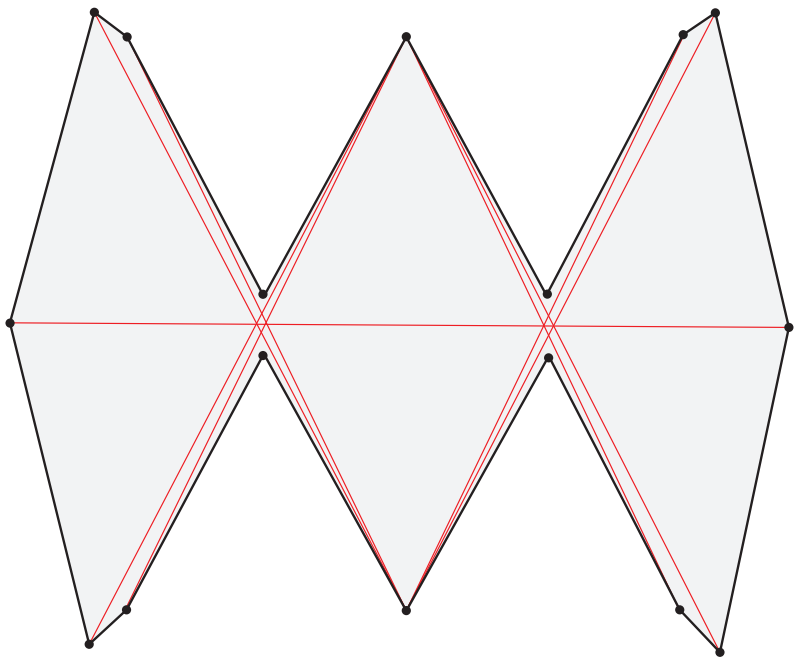}
\caption{No vertex may be moved to increase visibility without first decreasing visibility.}
\label{f:counter}
\end{figure}

We do have the following positive result in the special case of \emph{star} polygons.  A polygon $P$ is a star polygon if there exists a point $p \in P$ such that $p$ is visible to all points of $P$.

\begin {thm}
Let $P$ be a star $n$-gon.  There exists a chain in $\dpos$ from $P$ to the maximum element.
\end{thm}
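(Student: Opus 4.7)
The plan is to construct the chain in reverse: start from a convex target polygon and shrink vertices one at a time down to their positions in $P$, losing exactly one visibility edge at each critical moment; then reverse the resulting sequence. Fix a point $p$ in the kernel of $P$ and write each vertex in polar coordinates $v_i = p + r_i\hat u_i$, where the angles $\theta_i$ are cyclically ordered about $p$. Put $R = \max_i r_i$ and let $P^\ast$ be the polygon with vertices $v_i^\ast = p + R\hat u_i$; since the $\theta_i$ lie in cyclic order on a circle, $P^\ast$ is convex and represents the maximum element of $\dpos$.

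The deformation continuously shrinks each $v_i^\ast$ back to $v_i$ along its ray from $p$, processing vertices one at a time in any fixed order. Every intermediate polygon remains star-shaped from $p$, since the angular order is preserved. The key monotonicity statement is that shrinking a single vertex $v_i$ can only remove edges of $\vis$, with generically exactly one edge removed at each critical value of $r_i$. For a pair $(v_a, v_b)$ with $i \notin \{a,b\}$ whose short angular arc contains $\theta_i$, the segment $v_av_b$ is fixed while the polygon boundary at angle $\theta_i$ drops from $R$ down to $r_i$; visibility fails precisely when $r_i$ passes through the distance from $p$ to the segment $v_av_b$ at angle $\theta_i$, namely when $v_a$, $v_b$, $v_i$ become collinear. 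For a pair $(v_i, v_c)$ involving the moving vertex itself, the polar parametrization of a line through two points shows that both the segment-distance $\rho_{\text{seg}}(\theta)$ and the one affected polyline-distance on the edge $v_iv_{i+1}$ are monotone in $r_i$, and a direct comparison of the expansions at angles $\theta \in [\theta_i,\theta_c]$ shows the segment retreats inward at least as fast as the polyline; hence visibility of $(v_i, v_c)$ is preserved throughout the shrinking.

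Recording the polygon immediately after each critical $r_i$ therefore produces a finite chain in $\dpos$ from $P^\ast$ down to $P$, with consecutive polygons differing by exactly one visibility edge; the reversed sequence is the desired chain from $P$ up to the maximum element. The main technical care is arranging that critical events occur at distinct parameter values so that exactly one edge is lost per step, which can be ensured by an infinitesimally small generic perturbation of the shrinking rates or the order in which the vertices are processed.
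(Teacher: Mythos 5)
There is a genuine gap in your key monotonicity claim, and it sits exactly in the case you dispose of most quickly: pairs involving the moving vertex. Your analysis of a pair $(v_i,v_c)$ only compares the segment $v_iv_c$ against the two boundary edges adjacent to $v_i$, and only in the direction of showing that visibility is not \emph{lost}. But for the reversed sequence to be a chain in $\dpos$ you need that no visibility edge is \emph{gained} while $v_i$ shrinks, and that fails because of \emph{fixed} blockers at angles strictly between $\theta_i$ and $\theta_c$. Concretely: put $p$ at the origin, $v_c$ at angle $0$ and radius $10$, $v_j$ at angle $\pi/4$ and radius $1$, $v_i$ at angle $\pi/2$, and complete the polygon with two vertices of radius $10$ at angles $\pi$ and $3\pi/2$; this is a star polygon with $p$ in its kernel and $R=10$. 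If $v_j$ is processed before $v_i$ (your ``any fixed order'' permits this), then when $v_i$ starts its shrink at radius $10$ the segment $v_iv_c$ meets the ray at angle $\pi/4$ at radius $5\sqrt2>1$, so it exits the polygon and $(v_i,v_c)$ is invisible; when $v_i$ reaches radius $1/2$ the segment meets that ray at radius $10\sqrt2/21<1$ and lies inside the triangle $v_cv_jv_i\subset P$, so $(v_i,v_c)$ is visible. An edge is gained during the shrink (indeed the same edge was lost earlier when $v_j$ shrank), so the visibility graph is not monotone and the reversed sequence is not a chain. This is not something a generic perturbation of rates repairs: the one-vertex-at-a-time strategy is precisely the ``stronger claim'' that the paper refutes for general polygons (Figure~\ref{f:counter}), and even for star polygons you would at minimum have to exhibit and justify a processing order that avoids all such gains, which you have not done.

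The paper's proof avoids this entirely by moving \emph{all} points simultaneously along rays from a kernel point $x$, with the radial speed of a point $a$ governed by the normalized coordinate $r(a)=d(a,x)/d(p(a),x)$. Because this flow rescales all points on a common ray by the same factor, it preserves, for every chord $ab$ and every vertex $c$, the radial order of $c$ against the point of $ab$ on the ray through $c$; hence no visibility is ever lost as the polygon flows onto a configuration inscribed in a circle (which is convex), and the monotone increase of the visibility graph comes with no case analysis on which vertex is moving. If you want to keep a discrete argument, the real content is in choosing the order of moves so that, in the growing direction, a vertex is never pushed outward across a chord joining two already-placed vertices; as stated, your proof assumes rather than establishes this.
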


\begin {proof} 
Let $x$ be a point in the \emph{kernel} of $P$, the set of points which are visible to all points of $P$.   Choose an $\eps$-neighbourhood around $x$ contained in the kernel. For any $a \in P$, let $p(a)$ be a point on the boundary of $P$ which is the intersection of the ray from $x$ passing through $a$ with the boundary.  Let 
$$r(a) \ = \ \frac{d(a,x)}{d(p(a),x)}$$
and let $a'$ be the point on the ray from $x$ passing through $a$ such that $d(a',x)
= \eps \cdot r(a)$. Let $\phi$ be the map from $a$ to $a'$.
We thus construct a linear map $f: P \times [0,1] \rightarrow P$ where $f(P,0) = P$ and $f(P,1) = \phi(P)$ and where 
$$\frac{\partial}{\partial t} f(a, t) = r(a).$$
For any two visible vertices $a$ and $b$ of $P$, consider the triangle $abx$. There cannot be any vertices of $P$ contained in the triangle. If for any vertex $c$ of $P$, the ray from $x$ passing through $c$ intersects the line segment $ab$ at a point $z$, then $d(c,x) > d(z,x)$ and thus for no $t \in [0,1)$ can $d(f(c,t),x) \leq d(f(z,t),x)$. So no visibility is lost during the transformation, but notice that $\phi(P)$ is a circle. However, if we apply $f(a,t)$ only to the vertices of $P$ and map any point $z$ on an edge $(a,b)$ of $P$ to $z'$ on the edge between $f(a,t)$ and $f(b,t)$, we find that we get a polygon at every $t$.   Moreover, the edge is always further from $c$ than $f(z,t)$ for every $t \in [0,1]$, and thus visibility is still maintained.
\end{proof}

%
%
\bibliographystyle{amsplain}

\end{document}